\documentclass[11pt,reqno]{amsart}

\usepackage[margin=1in]{geometry}
\usepackage{amsmath,amssymb,amsthm,mathtools}
\usepackage{microtype}
\usepackage[hidelinks]{hyperref}

\newtheorem{theorem}{Theorem}[section]
\newtheorem{proposition}[theorem]{Proposition}
\newtheorem{lemma}[theorem]{Lemma}
\newtheorem{corollary}[theorem]{Corollary}
\theoremstyle{remark}
\newtheorem{remark}[theorem]{Remark}

\DeclareMathOperator{\Rm}{Rm}
\DeclareMathOperator{\Ric}{Ric}
\DeclareMathOperator{\inj}{inj}
\DeclareMathOperator{\diam}{diam}
\newcommand{\dGH}{d_{\mathrm{GH}}}

\title[Second-Order Departure of the Gigli--Mantegazza Flow]
{Second-Order Departure of the Gigli--Mantegazza Flow from Ricci Flow}
\author{Dongwoo Gang}
\address{Department of Mathematical Sciences, Seoul National University}
\email{dongwoo.gang@snu.ac.kr}
\date{August 14, 2026}
\makeatletter
\@namedef{subjclassname@2020}{\textup{2020} Mathematics Subject Classification}
\makeatother
\subjclass[2020]{53E20, 58J65, 49Q22}
\keywords{Gigli--Mantegazza flow, Ricci flow, heat kernel, optimal transport,
Wasserstein geometry}

\begin{document}
\raggedbottom
\begin{abstract}
For a closed connected Riemannian manifold $(M,g)$, the
Gigli--Mantegazza construction pulls back the quadratic Wasserstein metric
under the heat kernel embedding
$x\mapsto p_t(x,\cdot)\,d\operatorname{vol}_g$.  The resulting family
$\widetilde g_t$ agrees with Ricci flow to first order in $t$, but in
general not to second order.  We prove that
\[
 \widetilde g_t
 =g-2t\Ric_g
 +t^2\left(-\Delta\Ric_g+2\Ric_g^2-\frac23\mathcal Q_g\right)
 +O_{C^0}(t^3),
\]
where $\mathcal Q_g$ is quadratic in the full curvature tensor.  The term
$-\Delta\Ric_g$ also occurs in the second-order expansion of Ricci flow,
so the discrepancy depends pointwise and quadratically on the curvature.
In particular, at a Ricci-flat metric that is not flat, Ricci flow is
stationary while $\widetilde g_t$ is not.  The Gromov--Hausdorff distance
between the Gigli--Mantegazza and Ricci-flow metrics is $O(t^2)$, and round
spheres show that this estimate is sharp.
\end{abstract}
\maketitle

\section{Introduction and main results}
\label{sec:intro}

The relation between Ricci curvature, heat flow, and optimal transport is
well established.  Throughout this paper, we let $(M^n,g)$ be a closed
connected Riemannian manifold.  Von Renesse and Sturm
\cite{vonrenesse2005transport} proved that $\Ric_g\ge Kg$ holds if and
only if the heat flow on the quadratic Wasserstein space
$(\mathcal P_2(M),W_2)$ is $e^{-Kt}$-contractive.  McCann and Topping
\cite{mccann2010ricci} characterized backward super Ricci
flows by the contractivity of the corresponding diffusion in the
time-dependent Wasserstein distance.  Along a
backward Ricci flow, the evolution of the metric compensates for any lack
or excess of contractivity of the diffusion.

Motivated by this connection, Gigli and Mantegazza \cite{gigli2014flow}
introduced a different construction using the heat semigroup of the
initial metric $g$.  Let $P_t=e^{t\Delta_g}$ be that semigroup and let
$p_t(x,y)$ be its kernel with respect to $d\operatorname{vol}_g$.  For
$t>0$, the map
\[
 x\longmapsto\mu_{t,x}:=p_t(x,\cdot)\,d\operatorname{vol}_g
\]
embeds $M$ into $(\mathcal P_2(M),W_2)$ and induces the pullback metric
$\widetilde g_t$ on $M$.
Thus $\widetilde g_t$ records the infinitesimal Wasserstein distance
between heat kernels based at nearby points.
They proved that the first variation of $\widetilde g_t$
at $t=0$ agrees with $-2\Ric_g$ after integration along any geodesic, and
pointwise almost everywhere along it.

Let $g_t$ denote the Ricci flow with initial metric $g$.  Since
$\widetilde g_t$ is defined using the heat semigroup of the initial metric
rather than an evolving Laplacian, its first-order agreement with $g_t$
leaves open the order at which the two deformations first separate and
the geometric quantity that governs their discrepancy.  We show that the
two deformations differ at quadratic order in general.  More precisely, we
prove
\[
 \widetilde g_t-g_t=t^2D_g+O_{C^0}(t^3),
\]
where $D_g$ is the tensor defined in \eqref{eq:A-D-main}.
In the two second-order expansions computed below, the only term
involving covariant derivatives of curvature is $-\Delta\Ric_g$, and it
appears with the same coefficient in both.  The term therefore cancels in
the difference.  Hence $D_g$ is a tensor determined pointwise and
quadratically by the curvature.

The Gigli--Mantegazza construction rests on a heat flow and a metric
measure structure rather than on a smooth structure, and it has been
studied on suitable singular spaces and in weighted and warped variants
\cite{bandara2015geometric,carfora2014wasserstein,erbar2018smoothing}.
These developments motivate viewing the construction as a possible
extension of Ricci flow beyond the smooth category.  On smooth manifolds,
however, our result shows that the two deformations agree only to first
order in general.

Unless otherwise stated, all pointwise inner products,
norms, and contractions are taken with respect to $g$.
Let $\nabla$ denote the Levi--Civita connection of $g$.  We use
$\Rm_g(X,Y)Z=\nabla_X\nabla_YZ-\nabla_Y\nabla_XZ-\nabla_{[X,Y]}Z$ and
$\Ric_g(X,Y)=\sum_a\langle\Rm_g(e_a,X)Y,e_a\rangle$ for an orthonormal
basis $(e_1,\ldots,e_n)$ of $T_xM$.  We write
$\Rm_{abcd}=\langle\Rm_g(e_a,e_b)e_c,e_d\rangle$, raise indices with $g$,
and set $|\Rm_g|^2=\sum_{a,b,c,d}\Rm_{abcd}^2$.  On tensor fields, the
rough Laplacian is $\Delta=\operatorname{tr}_g\nabla^2$.  Our heat equation convention is
$\partial_tu=\Delta_gu$.

For $X,Y\in T_xM$ we define
\begin{align}
 \mathcal Q_g(X,Y)
 &=\sum_{a,b,c=1}^n
 \langle\Rm_g(e_a,e_b)e_c,X\rangle
 \langle\Rm_g(e_a,e_b)e_c,Y\rangle,
 \label{eq:Q-main}\\
 \mathcal S_g(X,Y)
 &=\sum_{a,b=1}^n\Ric_g(e_a,e_b)
 \langle\Rm_g(e_a,X)Y,e_b\rangle.
 \label{eq:S-main}
\end{align}
Let $\Ric_g^\#\colon TM\to TM$ be defined by
\[
 \langle\Ric_g^\#X,Y\rangle=\Ric_g(X,Y).
\]
We write
$\Ric_g^2(X,Y)=\langle\Ric_g^\#X,\Ric_g^\#Y\rangle$ and put
\begin{equation}\label{eq:A-D-main}
 A_g=-\Delta\Ric_g+2\Ric_g^2-\frac23\mathcal Q_g,
 \qquad
 D_g=2\mathcal S_g-\frac23\mathcal Q_g.
\end{equation}

\begin{theorem}[Second-order asymptotics]
\label{thm:main}
Let $(M,g)$ be a closed connected Riemannian manifold.  Let $g_t$ denote
the Ricci flow with initial metric $g$.  There are $T>0$ and $C<\infty$
depending on $(M,g)$ with the following property.  For every $x\in M$,
every $v\in T_xM$ with $|v|_g=1$, and every $0<t\le T$, one has
\begin{equation}\label{eq:main-GM}
 \left|\widetilde g_t(v,v)-g(v,v)
 +2t\Ric_g(v,v)-t^2A_g(v,v)\right|\le Ct^3
\end{equation}
and
\begin{equation}\label{eq:main-corrected}
 \left|\widetilde g_t(v,v)-g_t(v,v)-t^2D_g(v,v)\right|
 \le Ct^3.
\end{equation}
It follows that
\begin{equation}\label{eq:main-GH}
 \dGH\bigl((M,d_{\widetilde g_t}),(M,d_{g_t})\bigr)
 \le Ct^2.
\end{equation}
\end{theorem}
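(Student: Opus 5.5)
The starting point is the Otto-calculus description of the pullback metric. For a curve $x(s)$ with $\dot x(0)=v$, the tangent vector $\partial_s\mu_{t,x(s)}$ is represented by a potential $\varphi=\varphi_{t,x,v}$ solving the weighted elliptic equation
\[
 -\operatorname{div}_g\bigl(p_t(x,\cdot)\nabla\varphi\bigr)=\partial_v p_t(x,\cdot),
\]
and then $\widetilde g_t(v,v)=\int_M|\nabla\varphi|^2p_t(x,\cdot)\,d\operatorname{vol}_g$. I take this representation from the construction of Gigli--Mantegazza. For fixed $x$, work in normal coordinates at $x$ and insert the Minakshisundaram--Pleijel expansion
\[
 p_t(x,y)=(4\pi t)^{-n/2}e^{-r^2/4t}\bigl(u_0+tu_1+t^2u_2+O(t^3)\bigr).
\]
Off a ball of radius $t^{1/2-\epsilon}$ the kernel is exponentially small, uniformly in $x$ because $M$ is compact and has positive injectivity radius. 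Rescale $y=\exp_x(\sqrt t\,z)$. The weight then becomes a Gaussian $\gamma(z)$ times a correction with an expansion in powers of $\sqrt t$, whose coefficients are polynomials in $z$ built from $\Rm$, $\nabla\Rm$ and $\nabla^2\Rm$ at $x$. These come from the Taylor expansions of the metric in normal coordinates, of $u_0=\theta^{-1/2}$, and of $u_1$, which at the diagonal equals $\tfrac R6$ plus Taylor terms. The rescaled metric coefficients also expand in powers of $t\,|z|^2$.

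Next, solve the rescaled equation perturbatively. To leading order it is the Ornstein--Uhlenbeck equation $-\operatorname{div}(\gamma\nabla\varphi_0)=\partial_v\gamma$, with solution $\varphi_0$ linear, namely $\varphi_0=\langle v,z\rangle/\sqrt t$ up to normalization. Writing $\varphi=\varphi_0+\sqrt t\,\varphi_1+t\varphi_2+\cdots$ gives a hierarchy of equations $L\varphi_k=f_k$, where $L$ is the OU operator, each $f_k$ is polynomial in $z$, and each equation has a polynomial solution obtained by Hermite expansion. Odd orders vanish after integration against the Gaussian by parity. The energy $\int|\nabla\varphi|^2p_t$ therefore has an expansion in integer powers of $t$. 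I would check the $t^1$ coefficient against the known $-2\Ric_g(v,v)$ as a normalization test. The $t^2$ coefficient is a Gaussian moment computation, which I would organize as follows:
\begin{itemize}
\item the $\nabla^2\Rm$ terms reduce via the contracted Bianchi identity to $-\Delta\Ric_g$;
\item the quadratic terms reduce to $\Ric_g^2$ and $\mathcal Q_g$, with the curvature identities used to eliminate $\Rm*\Rm$ contractions such as $\mathcal S_g$;
\item a variational shortcut helps: $\widetilde g_t(v,v)=\sup_\psi\bigl(2\int\psi\,\partial_vp_t-\int|\nabla\psi|^2p_t\bigr)$, so the error in an approximate potential enters only quadratically.
\end{itemize}
The variational shortcut means potentials correct through order $t$ are enough for the $t^2$ term. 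Remainders are controlled by elliptic estimates for the weighted problem, using a Poincar\'e inequality for $p_t(x,\cdot)$ with constant $\sim t$ together with Gaussian moment bounds. This gives the uniform $Ct^3$ bound \eqref{eq:main-GM}.

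For \eqref{eq:main-corrected}, expand Ricci flow by Taylor's theorem in $t$:
\[
 g_t=g-2t\Ric_g+t^2\,\partial_t^2g_t|_{0}/2+O(t^3),
\]
with the cubic remainder uniform for $t\le T$ by smooth short-time existence. Using the evolution equation $\partial_t\Ric=\Delta_L\Ric$, this gives
\[
 g_t=g-2t\Ric_g+t^2\bigl(-\Delta\Ric_g-2\mathcal S_g+2\Ric_g^2\bigr)+O(t^3),
\]
up to sign conventions in $\mathcal S_g$ that I would check against the Lichnerowicz Laplacian. Subtracting yields $D_g$.

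For \eqref{eq:main-GH}, both $\widetilde g_t$ and $g_t$ satisfy $|h(v,v)-g(v,v)|\le Ct^2\cdot$ relative to each other, that is, $(1-Ct^2)g_t\le\widetilde g_t\le(1+Ct^2)g_t$. Lengths of curves therefore compare with factor $1+Ct^2$, and the identity map distorts distances by at most $Ct^2\diam$. This step needs $\widetilde g_t$ to be a genuine continuous Riemannian metric, which follows from the expansion for $t\le T$.

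The main obstacle is the $t^2$ Gaussian computation: it requires the Taylor data of the heat kernel coefficients through fourth order in normal coordinates, together with the second-order corrector in the weighted equation. Uniform justification of the remainder, including the near-cut-locus tail, is comparatively routine.
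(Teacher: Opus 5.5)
Your analytic framework matches the paper's. Both use the variational formula with the exact deficit $\widetilde g_t(v,v)-\mathcal J_t(h)=P_t(|\nabla\varphi_{t,x,v}-\nabla h|^2)(x)$. Both use a trial potential correct to relative order $t$, obtained from the rescaled Ornstein--Uhlenbeck problem; your hierarchy would reproduce the paper's $f_t=\chi_x\langle v-\tfrac43t\Ric_g^\#v,\exp_x^{-1}(\cdot)\rangle$, because the $\sqrt t$ corrector vanishes and the cubic drift terms cancel. Both bound the residual by $O(t)$ in $L^2(p_t(x,\cdot))$ and combine this with the heat-kernel Poincar\'e inequality $P_t(|h-P_th(x)|^2)(x)\le 2te^{2Kt}\mathcal B_t(h,h)$ to get $P_t(|\nabla\varphi_{t,x,v}-\nabla f_t|^2)(x)=O(t^3)$. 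The Ricci-flow Taylor expansion and the curve-length comparison are also the same. Three minor points. The leading potential is $\langle v,\xi\rangle=\sqrt t\langle v,z\rangle$, not $\langle v,z\rangle/\sqrt t$. In the tail you need a logarithmic gradient bound for $p_t$, not only smallness of the kernel, because the residual contains $\nabla_x\log p_t$ and $\nabla_y\log p_t$. Near $x$, the parametrix expansion must hold with first derivatives in both variables.

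The routes diverge at the step that carries the content of \eqref{eq:main-GM}, namely the identification of $A_g$, and there your proposal is only a promissory note. Evaluating $\mathcal J_t(f_t)$ by Gaussian moments requires $u_2(x,x)$, second-order Taylor data of $u_1$, and fourth-order data of $u_0$ and $g_{ij}$. It then requires a large cancellation among $\nabla^2\Rm$ and $\Rm*\Rm$ terms, which you predict but do not verify. Note that $\mathcal S_g$ is not an algebraic combination of $\Ric_g^2$ and $\mathcal Q_g$, so its absence from $A_g$ is a cancellation to be checked, not an identity to be invoked.

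The paper bypasses all of this. Since $f_t$ is a fixed smooth function with uniform $C^7$ bounds, $\mathcal J_t(f_t)=2d(P_tf_t)_x(v)-P_t(|\nabla f_t|^2)(x)$ can be expanded by the semigroup Taylor formula $P_tF=F+t\Delta F+\tfrac{t^2}{2}\Delta^2F+O(t^3)$. The remainder is controlled by the Bakry--\'Emery gradient bound, so heat-kernel asymptotics enter only to first order, in the residual estimate. Bochner's formula and the Ricci identity then reduce the $t^2$ coefficient to jets of $f_0$ at $x$:
\begin{itemize}
\item jets of order four and five either multiply $\nabla^2f_0(x)=0$ or cancel between $d(\Delta^2f_0)_x(v)$ and $\langle v,\Delta\nabla\Delta f_0\rangle$;
\item $-\Delta\Ric_g$ comes directly from $\Delta\bigl(\Ric_g(\nabla f_0,\nabla f_0)\bigr)$;
\item the only new input is $\nabla\Delta f_0(x)=-\tfrac23\Ric_g^\#v$ and $|\nabla^3f_0(x)|^2=\tfrac13\mathcal Q_g(v,v)$, which follow from $\nabla^3f_0(w,w,w)=0$, the Ricci identity and the first Bianchi identity.
\end{itemize}
Your route is viable in principle, but the step you single out as the main obstacle is exactly what this device removes. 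Without carrying that step out, the proposal does not establish the stated coefficient $A_g$.
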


Equivalently, the remainder tensors in \eqref{eq:main-GM} and
\eqref{eq:main-corrected} are $O(t^3)$ in the $C^0$ operator norm induced
by $g$.
On the unit round sphere $S^n$ with $n\ge2$, the Gromov--Hausdorff
distance in \eqref{eq:main-GH} has leading term
$\frac{\pi(n-1)(3n-5)}{6}t^2$, so the order $t^2$ in
\eqref{eq:main-GH} is sharp.

The tensor $D_g$ also yields the following rigidity and
anisotropy consequences.
Recall that Ricci flow $g_t$ is stationary when $g$ is Ricci-flat and remains
homothetic to $g$ when $g$ is Einstein.

\begin{corollary}[Rigidity and anisotropy]
\label{cor:qualitative}
Let $(M,g)$ be a closed connected Riemannian manifold.
\begin{enumerate}
\item If $g$ is Ricci-flat, then
$\widetilde g_t=g_t-\frac23t^2\mathcal Q_g+O_{C^0}(t^3)$.
It follows that $\widetilde g_t=g_t+O_{C^0}(t^3)$ if and only if $g$ is
flat.
\item If $\dim M=2$ or $3$, then
$\widetilde g_t=g_t+O_{C^0}(t^3)$ if and only if $g$ is flat.
\item If $S^2$ and $S^3$ carry round metrics of sectional curvature
$\lambda$ and $\lambda/2$ for $\lambda>0$, respectively, then their
product metric $g$ is Einstein, but $\widetilde g_t$ is not homothetic to
$g$ for all small $t>0$.
\end{enumerate}
\end{corollary}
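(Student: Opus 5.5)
The plan is to read everything off the second estimate \eqref{eq:main-corrected} of Theorem~\ref{thm:main}. It gives $\widetilde g_t-g_t=t^2D_g+O_{C^0}(t^3)$ with $D_g$ independent of $t$. Hence $\widetilde g_t=g_t+O_{C^0}(t^3)$ holds if and only if $D_g\equiv0$. Indeed, if $D_g(v,v)\neq0$ for some unit $v$, then $|\widetilde g_t(v,v)-g_t(v,v)|\ge t^2|D_g(v,v)|-Ct^3$, and this is not $O(t^3)$. Since $D_g$ is symmetric, $D_g\equiv0$ is the same as $D_g(v,v)=0$ for all $v$. So each item reduces to an algebraic statement about $D_g=2\mathcal S_g-\frac23\mathcal Q_g$.

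For (1), $\Ric_g=0$ gives $\mathcal S_g=0$, so $D_g=-\frac23\mathcal Q_g$. By \eqref{eq:Q-main}, $\mathcal Q_g(v,v)=\sum_{a,b,c}\langle\Rm_g(e_a,e_b)e_c,v\rangle^2\ge0$. Taking the trace gives $\operatorname{tr}_g\mathcal Q_g=|\Rm_g|^2$. Therefore $\mathcal Q_g\equiv0$ if and only if $\Rm_g\equiv0$, which proves (1).

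For (2), I would work pointwise in an orthonormal eigenbasis of $\Ric_g$.
\begin{itemize}
\item In dimension $2$, $\Rm_g$ is determined by the Gauss curvature $K$. Then $\mathcal S_g=K^2g$ and $\mathcal Q_g=2K^2g$, so $D_g=\frac23K^2g$, which vanishes only if $K=0$.
\item In dimension $3$, the curvature operator is diagonal on $e_i\wedge e_j$. Write $x=K_{12}$, $y=K_{13}$, $z=K_{23}$ for the sectional curvatures. Then $\rho_j=\sum_{l\ne j}K_{jl}$ are the Ricci eigenvalues. One checks that $\mathcal S_g$ and $\mathcal Q_g$ are diagonal in this basis, with $\mathcal S_g(e_1,e_1)=\rho_2x+\rho_3y$ and $\mathcal Q_g(e_1,e_1)=2(x^2+y^2)$. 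This yields
\[
D_{11}=\tfrac23(x^2+y^2)+2z(x+y),
\]
and cyclically for $D_{22}$ and $D_{33}$. In particular $D_{11}-D_{22}=(y-z)\bigl(\tfrac23(y+z)-2x\bigr)$, and cyclically.
\end{itemize}

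The remaining work in dimension $3$ is a finite case analysis, using each pairwise difference and then one diagonal equation $D_{ii}=0$.
\begin{itemize}
\item If all three linear alternatives hold, namely $y+z=3x$ and its cyclic versions, then summing them forces $x+y+z=0$. This in turn gives $x=y=z=0$.
\item If $x=y=z$, then $D_{11}=\frac{16}{3}x^2$, which vanishes only if $x=0$.
\item In the mixed cases, for example $y=z$ together with one linear relation, substitution into $D_{11}=0$ gives a definite quadratic in one variable, whose only root is $0$.
\end{itemize}
This case check is the only genuinely fiddly step, and I expect it to be the main obstacle. It is routine but must be done carefully.

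For (3), both factors have $\Ric=\lambda g$: for $S^2$ this is $1\cdot\lambda$, and for $S^3$ it is $2\cdot\lambda/2$. So $g$ is Einstein and $g_t=(1-2\lambda t)g$. Then $\mathcal S_g(X,Y)=\lambda\Ric_g(X,Y)=\lambda^2 g(X,Y)$. For a unit vector $v$ tangent to one factor, $\mathcal Q_g(v,v)=2\sum_{w}K(v,w)^2$, where the sum runs over an orthonormal basis of the other directions in that factor. This gives $\mathcal Q_g(v,v)=2\lambda^2$ on $S^2$ and $\mathcal Q_g(v,v)=2\cdot2\cdot(\lambda/2)^2=\lambda^2$ on $S^3$. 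So $D_g$ is not a multiple of $g$. Now suppose $\widetilde g_{t_k}=c_kg$ along a sequence $t_k\to0$. Then \eqref{eq:main-corrected} gives $(c_k-1+2\lambda t_k)/t_k^2=D_g(v,v)+O(t_k)$ for every unit $v$. Letting $k\to\infty$ makes $D_g(v,v)$ independent of $v$, which is a contradiction. Hence $\widetilde g_t$ is not homothetic to $g$ for all sufficiently small $t>0$.
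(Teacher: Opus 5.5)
Your proposal is correct and follows essentially the paper's route: each item is reduced via \eqref{eq:main-corrected} to the algebra of $D_g=2\mathcal S_g-\frac23\mathcal Q_g$. The differences are cosmetic. You detect $\mathcal Q_g\equiv0\Leftrightarrow\Rm_g\equiv0$ through the trace $|\Rm_g|^2$ rather than the paper's nullity characterization. In dimension three you run the same pairwise-difference case analysis in the sectional curvatures $K_{12},K_{13},K_{23}$ of a Ricci eigenbasis rather than in the Ricci eigenvalues; this is an invertible linear change of variables, and your $D_{11}-D_{22}$ agrees with \eqref{eq:three-dimensional-D-differences}. Your argument along a sequence $t_k\to0$ in item (3) makes explicit a step that the paper leaves implicit after Remark~\ref{rem:einstein-anisotropy}.
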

The statements in Corollary~\ref{cor:qualitative} are proved in
Section~\ref{sec:geometric-consequences}.

Theorem~\ref{thm:main} also refines two first-order formulas of Sturm.
For $x\in M$ and $v\in T_xM$ with $|v|_g=1$, Sturm proved that
$-\frac1{2t}\log\widetilde g_t(v,v)\to\Ric_g(v,v)$ as $t\downarrow0$
\cite[Theorem~3.5]{sturm2021remarks}.  The expansion \eqref{eq:main-GM}
refines this limit to
\begin{equation}\label{eq:Sturm-refinement}
 -\frac1{2t}\log\widetilde g_t(v,v)
 =\Ric_g(v,v)
 +t\left(\bigl(\Ric_g(v,v)\bigr)^2-\frac12A_g(v,v)\right)+O(t^2).
\end{equation}
Thus $A_g$ determines the new term in the order-$t$ correction to Sturm's
limit.  Replacing $\widetilde g_t$ by $g_t$ in the left-hand side of
\eqref{eq:Sturm-refinement} changes the coefficient of $t$ by
$D_g(v,v)/2$.

Integrating the metric expansion along a curve gives a corresponding
second-order refinement of Sturm's action formula
\cite[Corollary~3.6]{sturm2021remarks}.

\begin{corollary}[Second-order Wasserstein action]
\label{cor:Wasserstein-action}
Let $T$ and $C$ be as in Theorem~\ref{thm:main}.  For every $C^1$ curve
$\gamma\colon[0,1]\to M$ and every $0<t\le T$, let
$\mu_{t,\gamma_\cdot}$ denote the curve $s\mapsto\mu_{t,\gamma_s}$.  Then
\[
 \operatorname{Act}_2(\mu_{t,\gamma_\cdot})
 =\int_0^1(g-2t\Ric_g+t^2A_g)(\dot\gamma_s,\dot\gamma_s)\,ds+R_t(\gamma),
\]
where
$\operatorname{Act}_2(\eta)=\int_0^1|\dot\eta_s|_{W_2}^2\,ds$ for
$\eta\in AC^2\bigl([0,1];(\mathcal P_2(M),W_2)\bigr)$, and
\[
 |R_t(\gamma)|
 \le Ct^3\int_0^1|\dot\gamma_s|_g^2\,ds.
\]
\end{corollary}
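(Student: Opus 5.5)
We deduce Corollary~\ref{cor:Wasserstein-action} from the pointwise expansion \eqref{eq:main-GM} in Theorem~\ref{thm:main}. The only additional input is the identification of the Wasserstein metric speed of $s\mapsto\mu_{t,\gamma_s}$ with the $\widetilde g_t$-length of $\dot\gamma_s$.

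\textbf{Step 1: metric speed.} First we show that for a $C^1$ curve $\gamma$ the curve $\eta_s=\mu_{t,\gamma_s}$ lies in $AC^2([0,1];(\mathcal P_2(M),W_2))$ and that
\[
|\dot\eta_s|_{W_2}^2=\widetilde g_t(\dot\gamma_s,\dot\gamma_s)\quad\text{for a.e. }s.
\]
This is essentially how Gigli and Mantegazza define $\widetilde g_t$: the metric speed of the heat-kernel curve at $s$ is computed by the Otto-calculus norm of $\partial_s\eta_s$. Concretely, $\partial_s p_t(\gamma_s,\cdot)=\langle\nabla_xp_t(\gamma_s,\cdot),\dot\gamma_s\rangle$. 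Since $p_t>0$ is smooth, the elliptic equation $-\operatorname{div}(p_t\nabla\phi)=\partial_sp_t$ has a unique (up to constants) smooth solution $\phi_s$. The metric speed equals $\int|\nabla\phi_s|^2p_t$, which is exactly the quadratic form $\widetilde g_t(\dot\gamma_s,\dot\gamma_s)$ (see \cite{gigli2014flow}). Smooth dependence on $x$ shows that this quadratic form is continuous and bounded on the unit sphere bundle. Together with $\dot\gamma\in C^0$, this gives absolute continuity with square-integrable speed, using the standard bound $W_2(\eta_s,\eta_{s'})\le\int_s^{s'}|\dot\eta_r|\,dr$ and the characterization of metric derivatives via the continuity equation (Ambrosio--Gigli--Savar\'e). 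I take this identification to be part of the construction recalled in Section~\ref{sec:intro}.

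\textbf{Step 2: integrate the pointwise expansion.} By homogeneity, \eqref{eq:main-GM} applied to $v=\dot\gamma_s/|\dot\gamma_s|_g$ (when $\dot\gamma_s\neq0$; the case $\dot\gamma_s=0$ is trivial) gives
\[
\bigl|\widetilde g_t(\dot\gamma_s,\dot\gamma_s)-(g-2t\Ric_g+t^2A_g)(\dot\gamma_s,\dot\gamma_s)\bigr|\le Ct^3|\dot\gamma_s|_g^2,
\]
with the same $T$ and $C$, uniformly in $s$, since these constants depend only on $(M,g)$. Define $R_t(\gamma)$ as the integral over $s\in[0,1]$ of the difference inside the absolute value. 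Integrating over $s$ and using Step~1 then yields the formula for $\operatorname{Act}_2$ and the bound $|R_t(\gamma)|\le Ct^3\int_0^1|\dot\gamma_s|_g^2\,ds$. All integrands are continuous in $s$, so measurability is automatic.

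\textbf{Main obstacle.} The only nontrivial point is Step~1, the a.e.\ equality of the $W_2$ metric derivative with $\widetilde g_t(\dot\gamma,\dot\gamma)$ rather than merely an inequality. If the paper's definition of $\widetilde g_t$ is via $\lim W_2(\mu_{t,x},\mu_{t,\exp_x(hv)})^2/h^2$, I would derive the equality from the fact that $x\mapsto\mu_{t,x}$ is a smooth map into the Otto-smooth part of $\mathcal P_2$, where densities are positive and smooth. On that part, $W_2$ is locally induced by the Otto metric, so the metric speed of a $C^1$ curve equals its Otto norm, as in \cite{sturm2021remarks}, Corollary~3.6, whose argument I would reuse verbatim.
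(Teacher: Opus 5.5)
Your proposal is correct and is the paper's proof. The paper also takes the a.e.\ identity $|\dot\mu_{t,\gamma_s}|_{W_2}^2=\widetilde g_t(\dot\gamma_s,\dot\gamma_s)$ directly from Gigli--Mantegazza; it is recorded as \eqref{eq:metric-speed-pullback} in Section~\ref{sec:background}, not in the introduction, so your Step~1 needs no independent argument. It then applies \eqref{eq:main-GM} to $\dot\gamma_s/|\dot\gamma_s|_g$, uses homogeneity, handles $\dot\gamma_s=0$ trivially, and integrates over $s\in[0,1]$, exactly as in your Step~2.
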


For fixed $x\in M$ and a unit vector $v\in T_xM$, the quantity
$\widetilde g_t(v,v)$ is the maximum of a concave quadratic functional.
Its maximizer modulo additive constants is the velocity
potential $\varphi_{t,x,v}$ of the curve
$a\mapsto\mu_{t,\exp_x(av)}$ at $a=0$.  We approximate this maximizer by
$f_t=f_0+tf_1$, where $f_0$ is the linear function
$\langle v,\exp_x^{-1}(\cdot)\rangle$ near $x$.  After parabolic
rescaling, the choice of $f_0$ makes the zeroth-order terms agree, and
$f_1$ is chosen to make the terms of order $t$ agree.  We obtain a uniform residual
estimate for this approximation using differentiated heat-kernel
asymptotics in the rescaled region near $x$ together with a global tail
bound.
Testing the residual estimate with $f_t-\varphi_{t,x,v}$ yields
$\bigl[P_t(|\nabla f_t-\nabla\varphi_{t,x,v}|^2)(x)\bigr]^{1/2}
=O(t^{3/2})$.  The exact quadratic deficit identity expresses the metric
deficit as the square of this error, so a first-order approximation of
the velocity potential suffices to obtain the second-order metric
coefficient with an $O(t^3)$ remainder.
Expanding the variational functional at $f_t$ using the heat semigroup
determines $A_g$.  The second-order Ricci-flow expansion contains the same
term $-\Delta\Ric_g$, so subtracting the two
expansions leaves $D_g$, which depends algebraically on the curvature.

\section{Wasserstein formulation of the Gigli--Mantegazza metric}
\label{sec:background}

We recall the smooth Wasserstein formalism needed below.  Let
$\mathcal P_2(M)$ be the space of Borel probability measures on $(M,g)$.
Equip this space with the quadratic Wasserstein distance
\[
 W_2^2(\mu,\nu)
 =\inf_{\pi\in\Pi(\mu,\nu)}
 \int_{M\times M}d_g(x,y)^2\,d\pi(x,y),
\]
where $\Pi(\mu,\nu)$ denotes the set of couplings of $\mu$ and $\nu$.

Let $\mathcal P^\infty(M)\subset\mathcal P_2(M)$ denote the space of
measures $\rho\,d\operatorname{vol}_g$ with $\rho\in C^\infty(M)$
positive.  Fix $\mu=\rho\,d\operatorname{vol}_g\in\mathcal P^\infty(M)$.
A smooth tangent vector at $\mu$ has the form
$\sigma\,d\operatorname{vol}_g$ for some $\sigma\in C^\infty(M)$ satisfying
$\int_M\sigma\,d\operatorname{vol}_g=0$.  Otto's formal Riemannian
structure on $\mathcal P^\infty(M)$ represents this tangent vector by a
potential $\varphi\in C^\infty(M)$ satisfying
\[
 -\nabla\!\cdot(\rho\nabla\varphi)=\sigma.
\]
The potential is unique up to an additive constant.
Identifying tangent vectors with their potentials modulo constants, the
formal Riemannian metric at measure $\mu$ is
\[
 \langle\varphi,\psi\rangle_\mu
 =\int_M\langle\nabla\varphi,\nabla\psi\rangle_g\rho\,
 d\operatorname{vol}_g.
\]
Let $s\mapsto\rho_s\in C^\infty(M)$ be a smooth family of positive
probability densities.  Set
$\eta_s=\rho_s\,d\operatorname{vol}_g$.  Its tangent vector is represented
by the velocity potential $\varphi_s$ solving the continuity equation
\[
 -\nabla\!\cdot(\rho_s\nabla\varphi_s)=\partial_s\rho_s.
\]
The corresponding formal norm agrees with the metric speed in
$(\mathcal P_2(M),W_2)$.  Their common value satisfies
\[
 |\dot\eta_s|_{W_2}
 :=\lim_{r\to s}\frac{W_2(\eta_r,\eta_s)}{|r-s|},
 \qquad
 |\dot\eta_s|_{W_2}^2
 =\langle\varphi_s,\varphi_s\rangle_{\eta_s}.
\]
See \cite{lott2008geometric,ambrosio2013users} for background on
Wasserstein geometry.

Write
$P_tf(x)=\int_Mp_t(x,y)f(y)\,d\operatorname{vol}_g(y)$.
For $t>0$, consider the heat kernel embedding
\[
 \iota_t\colon M\longrightarrow\mathcal P^\infty(M),
 \qquad
 \iota_t(x)=\mu_{t,x}:=p_t(x,\cdot)\,d\operatorname{vol}_g.
\]
Fix $x\in M$ and $v\in T_xM$.  Let $d_1$ denote differentiation in the
first heat kernel variable while the second variable is fixed.  The tangent
vector $d\iota_t|_x(v)$ has density $d_1p_t(x,\cdot)(v)$.  It is
represented by the potential $\varphi_{t,x,v}$ solving
\begin{equation}\label{eq:velocity-equation}
 -\nabla_y\!\cdot\bigl(p_t(x,y)\nabla_y\varphi_{t,x,v}(y)\bigr)
 =d_1p_t(x,y)(v).
\end{equation}
Since $\int_Mp_t(x,y)\,d\operatorname{vol}_g(y)=1$ for every $x$,
differentiating in the first variable shows that the right-hand side of
\eqref{eq:velocity-equation} has zero integral.  Standard elliptic
theory therefore shows that the equation admits a smooth solution, unique
up to an additive constant
\cite[Theorem~2.2 and Proposition~3.1]{gigli2014flow}.
Since the formulas below depend only on its gradient, we normalize the
solution to have zero $\operatorname{vol}_g$-mean.

The Gigli--Mantegazza metric $\widetilde g_t$ is the pullback under
$\iota_t$ of the formal Riemannian metric on $\mathcal P^\infty(M)$.
For $v,w\in T_xM$, define
\[
 \widetilde g_t(v,w)
 =\langle\varphi_{t,x,v},\varphi_{t,x,w}\rangle_{\mu_{t,x}}
 =P_t\langle\nabla\varphi_{t,x,v},
             \nabla\varphi_{t,x,w}\rangle(x).
\]
Gigli and Mantegazza
\cite[Propositions~3.4 and~3.5]{gigli2014flow} prove that the tensor
$\widetilde g_t$ is a smooth Riemannian metric on $M$.  They also prove
that the curve $s\mapsto\mu_{t,\gamma_s}$ is absolutely continuous in
$(\mathcal P_2(M),W_2)$ whenever $\gamma$ is absolutely continuous in
$M$.  Its metric speed satisfies
\begin{equation}\label{eq:metric-speed-pullback}
 |\dot\mu_{t,\gamma_s}|_{W_2}^2
 =\widetilde g_t(\dot\gamma_s,\dot\gamma_s)
\end{equation}
for almost every $s$.

For $f,h\in C^\infty(M)$, set
\begin{equation}\label{eq:BL-forms}
 \mathcal B_t(f,h)
 :=\langle f,h\rangle_{\mu_{t,x}}
 =P_t\langle\nabla f,\nabla h\rangle(x),
 \qquad
 \mathcal L_t(h)
 :=\langle\varphi_{t,x,v},h\rangle_{\mu_{t,x}}
 =d(P_th)_x(v).
\end{equation}
The last equality follows by integration by parts in
\eqref{eq:velocity-equation}.  With $x$ and $v$ fixed, we suppress the
dependence of $\mathcal B_t$ on $x$ and that of $\mathcal L_t$ on $x$ and $v$.
The definition of $\mathcal L_t$ and the bilinearity of $\mathcal B_t$ give
\[
 \begin{aligned}
 2\mathcal L_t(f)-\mathcal B_t(f,f)
 &=2\mathcal B_t(\varphi_{t,x,v},f)-\mathcal B_t(f,f)\\
 &=\mathcal B_t(\varphi_{t,x,v},\varphi_{t,x,v})
   -\mathcal B_t(f-\varphi_{t,x,v},f-\varphi_{t,x,v}).
 \end{aligned}
\]
The final quadratic term equals
$P_t(|\nabla(f-\varphi_{t,x,v})|^2)(x)$, so it is nonnegative and
vanishes for $f=\varphi_{t,x,v}$.  Since
$\widetilde g_t(v,v)=\mathcal B_t(\varphi_{t,x,v},\varphi_{t,x,v})$, we obtain
\begin{equation}\label{eq:gm-definition}
 \widetilde g_t(v,v)
 =\sup_{f\in C^\infty(M)}
 \left\{2\mathcal L_t(f)-\mathcal B_t(f,f)\right\}
 =\mathcal B_t(\varphi_{t,x,v},\varphi_{t,x,v})
 =P_t(|\nabla\varphi_{t,x,v}|^2)(x).
\end{equation}
We will use this variational formula below.

\section{First-order approximation of the velocity potential}
\label{sec:first-correction}

Since $M$ is closed, choose $0<r_*<\inj(M,g)$.  Then $\exp_x$ is a
diffeomorphism on a neighborhood of
$\overline{B(0,r_*)}\subset T_xM$ for every $x\in M$.  For each $x$, choose
an orthonormal basis of $T_xM$ and let $\xi=(\xi^1,\ldots,\xi^n)$ denote
the associated geodesic normal coordinates centered at $x$.  By compactness,
we may decrease $r_*$ so that
\[
 \frac12|\zeta|^2\le g^{ij}(\xi)\zeta_i\zeta_j\le2|\zeta|^2
 \qquad (|\xi|\le r_*,\ \zeta\in\mathbb R^n)
\]
for every $x\in M$.  In these coordinates, for every $m\ge0$, all partial
derivatives of order at most $m$ of the
coordinate functions $g_{ij}$ and $g^{ij}$ and of $\Gamma^k_{ij}$ are
bounded uniformly for $x\in M$ and $|\xi|\le r_*$.

Choose $0<r_0<r_*/4$ and a smooth cutoff function
$\chi_0\colon[0,\infty)\to[0,1]$ satisfying
\[
 \chi_0(s)=
 \begin{cases}
  1, & \text{if }0\le s\le r_0,\\
  0, & \text{if }s\ge 2r_0.
 \end{cases}
\]
For $x\in M$, set $\chi_x(y)=\chi_0(d(x,y))$.  For $v\in T_xM$ and
$y\in B_g(x,2r_0)$, let $\xi=\exp_x^{-1}(y)\in T_xM$ and define
\begin{equation}\label{eq:f1-main}
 f_0(y)=\chi_x(y)\langle v,\xi\rangle,\qquad
 f_1(y)=-\frac43\chi_x(y)\langle\Ric_g^\#v,\xi\rangle.
\end{equation}
Extending both functions by zero outside this ball gives smooth functions
on $M$.  For $t>0$, set $f_t=f_0+tf_1$.  When $x$ and $v$
are fixed, we omit them from the notation for these functions.
By compactness and the uniform normal-coordinate bounds above, for every
$m\ge0$ there is $C_m<\infty$ such that
\begin{equation}\label{eq:uniform-trial-bounds}
 \sup_{\substack{x\in M,\ v\in T_xM\\ |v|_g=1}}
 \left(\|\nabla^j f_0\|_{L^\infty(M)}
 +\|\nabla^j f_1\|_{L^\infty(M)}\right)\le C_m,
 \qquad 0\le j\le m.
\end{equation}

\begin{theorem}[First-order approximation]\label{thm:first-correction}
There are $T>0$ and $C<\infty$ depending on $(M,g)$, such that for every
$x\in M$ and $v\in T_xM$ with $|v|_g=1$,
\begin{equation}\label{eq:energy-error}
 P_t\!\left(
 |\nabla\varphi_{t,x,v}-\nabla f_t|^2
 \right)(x)\le Ct^3,
 \qquad 0<t\le T.
\end{equation}
\end{theorem}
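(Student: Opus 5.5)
The proof will rest on the quadratic deficit identity behind \eqref{eq:gm-definition}. Set $e=\varphi_{t,x,v}-f_t$. By the definition of $\mathcal L_t$ in \eqref{eq:BL-forms} and bilinearity, for every $h\in C^\infty(M)$,
\[
 \mathcal B_t(e,h)=\mathcal L_t(h)-\mathcal B_t(f_t,h)
 =d(P_th)_x(v)-P_t\langle\nabla f_t,\nabla h\rangle(x)=:\mathcal R_t(h).
\]
Taking $h=e$ gives $P_t(|\nabla e|^2)(x)=\mathcal R_t(e)$. It therefore suffices to prove the residual bound
\[
 |\mathcal R_t(h)|\le Ct^{3/2}\bigl[P_t(|\nabla h|^2)(x)\bigr]^{1/2}
 \qquad\text{for all } h\in C^\infty(M).
\]
Applied to $h=e$, this yields \eqref{eq:energy-error} after dividing by the square root of $P_t(|\nabla e|^2)(x)$.

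To obtain the residual bound, I would write $\mathcal R_t(h)$ as an integral against $\nabla h$ with weight $p_t$. First, $d(P_th)_x(v)=\int d_1p_t(x,y)(v)\,h(y)\,dy$. I would then integrate by parts in $y$ on the second term:
\[
 -\int p_t\langle\nabla f_t,\nabla h\rangle=\int h\,\nabla_y\!\cdot(p_t\nabla f_t).
\]
Hence $\mathcal R_t(h)=\int h\,\sigma_t\,dy$ with $\sigma_t=d_1p_t(x,\cdot)(v)+\nabla_y\!\cdot(p_t\nabla f_t)$, and $\int\sigma_t=0$. The goal is to write $\sigma_t=-\nabla\!\cdot(p_tW_t)$ for a vector field $W_t$ satisfying $\int|W_t|^2p_t\,dy\le Ct^3$. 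Cauchy--Schwarz then gives the residual bound. I would solve for $W_t$ approximately in the rescaled variables. In normal coordinates with $\xi=\sqrt t\,z$, use the Minakshisundaram--Pleijel expansion
\[
 p_t=(4\pi t)^{-n/2}e^{-|\xi|^2/4t}\,\theta^{-1/2}\bigl(u_0+tu_1+O(t^2)\bigr),
\]
differentiated in both variables; here the $x$-derivative of $d(x,y)^2/4t$ produces the leading term $\langle v,\xi\rangle/2t$. Expanding $\sigma_t/p_t$ as a polynomial in $z$ with powers of $\sqrt t$, the order $t^{-1/2}$ term cancels by the choice $f_0=\langle v,\xi\rangle$, since $\nabla\!\cdot(p\nabla f_0)\approx -p\langle v,\xi\rangle/2t$. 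The order $t^{1/2}$ terms, which involve cubic polynomials in $z$ built from curvature through $g_{ij}=\delta_{ij}-\frac13R_{ikjl}\xi^k\xi^l$, the volume density, and $u_1=\frac16 R$, cancel up to a gradient-type piece exactly when $f_1=-\frac43\Ric^\#v\cdot\xi$. The leftover $\sigma_t/p_t=O(t^{3/2})\,\mathrm{poly}(z)$ is orthogonal to constants in the Gaussian measure. It can therefore be written as $-\nabla\!\cdot(p_tW_t)$ with $|W_t|\lesssim t\cdot\mathrm{poly}(z)$ in $\xi$-units, by solving an Ornstein--Uhlenbeck equation whose inverse maps polynomials to polynomials. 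This gives $\int|W_t|^2p_t\lesssim t^2\cdot t=t^3$, using one factor $\sqrt t$ from the $z$ to $\xi$ scaling on each side.

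The cutoff region is handled by the global tail bound. Outside $B(x,r_0)$, Gaussian upper bounds for $p_t$ and $\nabla_xp_t$ (Li--Yau/Hsu-type estimates on compact manifolds) make $\sigma_t=O(e^{-c/t})$. This error can be absorbed by solving the weighted divergence equation globally. For instance, one can use the Poincaré inequality for $\mu_{t,x}$, whose constant is at most polynomial in $t^{-1}$; alternatively, one can bound $|\int h\sigma_t|$ by $\int|h-\bar h|\,|\sigma_t|$ and a weighted Poincaré estimate. Either way the contribution is $O(t^{N})$ times $[P_t|\nabla h|^2]^{1/2}$.

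I expect the main obstacle to be the order-$t^{1/2}$ cancellation. This requires the second-order heat-kernel coefficient, its first $x$-derivative, and the metric and volume corrections in normal coordinates, all handled uniformly in $x$. The difficulty is to confirm that what remains after choosing $f_1$ is exactly a weighted divergence of an $O(t)$ field rather than a genuine $O(t^{1/2})$ residual. I would carry out this step by pairing with the test polynomials $z_i$ and $z_iz_jz_k$, which detect the obstruction, and check that the coefficient $-\frac43$ kills it. A secondary obstacle is controlling the remainders of the differentiated expansion uniformly, with polynomial weights in $z$ integrated against a Gaussian.
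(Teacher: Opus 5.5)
Your reduction to a residual bound and your local computation agree with the paper's. The cancellation you single out as the main obstacle is in fact exact and routine. With $f_1=\langle b,\xi\rangle$, the cubic terms drop out because $z_j\Rm^i{}_k{}^j{}_\mu z^kz^\mu=0$. The order-$t$ part of the rescaled residual $\sqrt t\,\sigma_t/p_t$ is then $\frac23\Ric_g(v,z)+\frac12\langle b,z\rangle$, and $b=-\frac43\Ric_g^\#v$ removes it.

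The genuine gap is a step you treat as automatic: passing from ``the leftover has mean zero and is bounded by a polynomial in $z$'' to ``it equals $-\nabla\cdot(p_tW_t)$ with $\int|W_t|^2p_t\le Ct^3$.'' The fact that the OU inverse maps polynomials to polynomials does not justify this, for three reasons:
\begin{itemize}
\item the leftover is not a polynomial, since it contains the non-explicit remainders of the heat-kernel, metric and volume expansions;
\item it has mean zero for $\mu_{t,x}$ on $M$, not for the Gaussian on $\mathbb R^n$;
\item the relevant operator is the weighted divergence for $\rho_{t,x}p_1^{\mathbb R^n}(0,z)\,dz$ with metric $g^{ij}(\sqrt t\,z)$ on a ball, not the Ornstein--Uhlenbeck operator.
\end{itemize}
For general mean-zero data $r$, the minimal solution $W$ of $-p_t^{-1}\nabla\cdot(p_tW)=r$ satisfies $\|W\|_{L^2(\mu_{t,x})}\le C_P^{1/2}\|r\|_{L^2(\mu_{t,x})}$, where $C_P$ is the Poincar\'e constant of $\mu_{t,x}$. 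Moreover, $C_P^{1/2}$ is the best constant in this estimate. So your argument really needs a Poincar\'e inequality for the heat-kernel measure with $C_P=O(t)$. Your tail step invokes such an inequality, with a constant ``polynomial in $t^{-1}$'', but without proof. A naive comparison with $\operatorname{vol}_g$ through two-sided Gaussian bounds only gives a constant of order $e^{C/t}$, with $C$ governed by the diameter. That does not beat the $e^{-c/t}$ size of $\sigma_t$ outside $B(x,r_0)$.

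The missing ingredient is the paper's heat-semigroup Poincar\'e inequality. Choose $K$ with $\Ric_g\ge-Kg$. The Bakry--\'Emery bound $|\nabla P_sf|^2\le e^{2Ks}P_s(|\nabla f|^2)$ and the identity $\frac{d}{ds}P_s\bigl((P_{t-s}h)^2\bigr)=2P_s\bigl(|\nabla P_{t-s}h|^2\bigr)$ give
\[
 P_t\bigl(|h-P_th(x)|^2\bigr)(x)\le 2te^{2Kt}\,P_t\bigl(|\nabla h|^2\bigr)(x).
\]
With this, no flux has to be constructed. Since $\int_M\sigma_t\,d\operatorname{vol}_g=0$, you may replace $h$ by $h-P_th(x)$ in $\mathcal R_t(h)=\int_Mh\,\sigma_t\,d\operatorname{vol}_g$. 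Cauchy--Schwarz in $L^2(\mu_{t,x})$ then gives
\[
|\mathcal R_t(h)|\le\|\sigma_t/p_t\|_{L^2(\mu_{t,x})}\,(2te^{2Kt})^{1/2}\,\mathcal B_t(h,h)^{1/2}.
\]
Your bulk expansion and the Gaussian tail bounds give $\|\sigma_t/p_t\|_{L^2(\mu_{t,x})}=O(t)$. Note that in original units the leftover is $O(t)$ times a polynomial in $z$; the $t^{3/2}$ you wrote is the rescaled quantity $\sqrt t\,\sigma_t/p_t$. The product $t\cdot t^{1/2}$ is the required $t^{3/2}$.

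Without this inequality, the OU route could be completed only with substantial extra work. You would have to push the expansions to high order and invert the non-Gaussian weighted operator perturbatively with cutoffs. Even then, you would still need some global Poincar\'e bound for $\mu_{t,x}$ to absorb the final remainder.
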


The proof is based on the following residual estimate, stated in terms of
the forms introduced in \eqref{eq:BL-forms}.

\begin{proposition}[Residual estimate]\label{prop:residual}
There are $T>0$ and $C<\infty$ depending on $(M,g)$, such that for every
$x\in M$ and $v\in T_xM$ with $|v|_g=1$,
\begin{equation}\label{eq:UR}
 |\mathcal B_t(f_t,h)-\mathcal L_t(h)|
 \le Ct^{3/2}\mathcal B_t(h,h)^{1/2}
\end{equation}
for every $h\in C^\infty(M)$ and $0<t\le T$.
\end{proposition}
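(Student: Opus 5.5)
Rewrite the residual in divergence form by integrating by parts on the closed manifold $M$. Since $\mathcal L_t(h)=d(P_th)_x(v)=\int_M d_1p_t(x,y)(v)\,h(y)\,d\operatorname{vol}_g(y)$ and $\mathcal B_t(f_t,h)=\int_M p_t(x,y)\langle\nabla f_t,\nabla h\rangle\,d\operatorname{vol}_g(y)$, we have
\[
 \mathcal B_t(f_t,h)-\mathcal L_t(h)=-\int_M R_t(y)\,h(y)\,d\operatorname{vol}_g(y),
\]
where
\[
 R_t:=\nabla_y\!\cdot\bigl(p_t(x,\cdot)\nabla f_t\bigr)+d_1p_t(x,\cdot)(v).
\]
Because $\int_M R_t\,d\operatorname{vol}_g=0$, we may replace $h$ by $h-c$ with $c=P_th(x)$. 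The claim then follows from a weighted Poincaré inequality for $\mu_{t,x}$ once we show
\[
 \int_M \frac{R_t^2}{p_t(x,\cdot)}\,d\operatorname{vol}_g\le Ct^2 .
\]
Indeed, Cauchy--Schwarz gives
\[
 \Bigl|\int R_t(h-c)\Bigr|\le\Bigl(\int R_t^2/p_t\Bigr)^{1/2}\Bigl(P_t|h-c|^2(x)\Bigr)^{1/2},
\]
and a Poincaré inequality $P_t|h-c|^2(x)\le Ct\,P_t|\nabla h|^2(x)$ with constant of order $t$ supplies the remaining factor $t^{1/2}$. This inequality I take to be the standard local Poincaré/Bakry--Émery-type bound: $\operatorname{Var}_{\mu_{t,x}}(h)\le C t\,\mathcal B_t(h,h)$ for small $t$, uniformly in $x$. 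It follows from the $\Gamma_2$ lower bound $\Ric\ge -K$, via the reverse local Poincaré inequality $P_t(h^2)-(P_th)^2\le \frac{e^{2Kt}-1}{K}P_t|\nabla h|^2$. I would quote this semigroup identity, proved by differentiating $s\mapsto P_s((P_{t-s}h)^2)$ together with the gradient bound $|\nabla P_sh|^2\le e^{2Ks}P_s|\nabla h|^2$.

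It remains to estimate $\int R_t^2/p_t$, which is the main computation. Split $M$ into the region $d(x,y)\le r_0$, where $\chi_x\equiv1$, and its complement. On the complement, $f_t$ and its derivatives are bounded by \eqref{eq:uniform-trial-bounds}. The differentiated Gaussian upper bounds $|\nabla^k_y p_t|+|d_1\nabla^k_yp_t|\le Ct^{-(n+k+1)/2}e^{-d^2/(5t)}$ give $|R_t|\le Ce^{-c/t}$ there. The lower bound $p_t\ge ct^{-n/2}e^{-d^2/(3t)}$ alone is too weak to control $R_t^2/p_t$, so instead I bound $R_t^2/p_t\le |R_t|\cdot|R_t|/p_t$ using the derivative ratio bound $|\nabla\log p_t|\le C(d/t+t^{-1/2})$, which holds globally on closed manifolds by Hamilton/Sheu-type estimates. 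This makes the tail contribution $O(e^{-c/t})$.

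On the inner region, write $y=\exp_x(\sqrt t\,z)$ and use the Minakshisundaram--Pleijel expansion $p_t=(4\pi t)^{-n/2}e^{-d^2/4t}(u_0+tu_1+t^2u_2+O(t^3))$ with its differentiated versions in both variables, uniformly in $x$. Then $R_t/p_t$ becomes a polynomial in $z$ times powers of $\sqrt t$, up to $O(t^{N})$ remainders with polynomial growth in $z$. The zeroth-order terms cancel because $f_0$ is linear in normal coordinates: the Euclidean identity is $\nabla\cdot(G\nabla\langle v,\xi\rangle)=-\langle v,\nabla_x G\rangle$. The order-$t^{1/2}$ terms cancel by parity in $z$, and the order-$t$ terms cancel precisely because of the coefficient $-\frac43\Ric^\#v$ in $f_1$. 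This requires expanding $g_{ij}$, $\sqrt{\det g}$, $u_0=1+\frac1{12}\Ric(\xi,\xi)+\dots$ and the $x$-derivative of $d^2$ and of $u_0$, which involves the $\frac13\Rm$ term in $d_1 d^2$. Verifying this cancellation is the main obstacle. I would organize it by computing $d_1\log p_t$ and $\Delta_y f_0$ to order $|\xi|^2$ in normal coordinates, and checking that the residual coefficient of $t$ in $R_t/p_t$ is an odd cubic in $z$ plus a linear term. Both must vanish, and the linear term fixes the $\frac43$. After these cancellations, $|R_t|/p_t\le Ct^{1/2}\cdot t^{1/2}\,(1+|z|)^{m}\cdot t^{-1/2}$ in unscaled units. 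The precise bookkeeping is that $R_t/p_t=O(t)\,\mathrm{poly}(|z|)$, so $\int R_t^2/p_t = P_t\bigl((R_t/p_t)^2\bigr)(x)\le Ct^2$, using Gaussian moment bounds on $\mu_{t,x}$.
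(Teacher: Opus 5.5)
Your proposal is essentially the paper's proof. Your $\int_M R_t^2/p_t\,d\operatorname{vol}_g$ is exactly the paper's $P_t(|\varepsilon_t|^2)(x)$ with $\varepsilon_t=-R_t/p_t$, and the ingredients are the same: subtracting $P_th(x)$ via $\int_M R_t\,d\operatorname{vol}_g=0$, the Bakry--\'Emery local Poincar\'e bound with constant of order $t$, Cauchy--Schwarz, and a rescaled parametrix expansion near $x$ together with a Gaussian tail bound (splitting at the fixed radius $r_0$ instead of $\sqrt t\,L_t$ is harmless).

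What you leave undone is the order-$t$ cancellation, which the paper carries out explicitly: the cubic term vanishes by antisymmetry of $\Rm$, and the $t\Ric_g(v,z)$ contributions $\frac13-\frac23+\frac16=-\frac16$ match $s_{t,x,v}$. Two smaller points. The $t^{1/2}$-level terms vanish because normal coordinates have no first-order Taylor terms, not by parity alone. Your intermediate bound $Ct^{1/2}\cdot t^{1/2}\cdot t^{-1/2}$ should read $O(t)$, as you then state.
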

Once Proposition~\ref{prop:residual} is proved, Theorem~\ref{thm:first-correction}
follows by testing \eqref{eq:UR} against the difference between the exact
and approximate velocity potentials.

\section{Heat kernel bounds and parabolic rescaling}
\label{sec:heat-kernel-input}

Since $(M,g)$ is closed, the heat kernel upper bound
\cite[Theorem~4]{cheng1981upper} and the
gradient estimate for $\log p_t$
\cite[Theorem~5.5.3]{hsu2002stochastic} ensure the existence of constants
$T,C_0,c_0,C_1>0$ such that
\begin{align}
 p_t(x,y)&\le C_0t^{-n/2}\exp\!\left(-c_0d(x,y)^2/t\right),
 \label{eq:global-gaussian}\\
 |\nabla_x\log p_t(x,y)|+|\nabla_y\log p_t(x,y)|
 &\le C_1\left(t^{-1/2}+d(x,y)/t\right)
 \label{eq:global-log-gradient}
\end{align}
for $x,y\in M$ and $0<t\le T$. For $0<t<1$, set
\[
 L_t=\left(\frac{n/2+8}{c_0}\log(1/t)\right)^{1/2}.
\]

\begin{lemma}[Heat kernel tail]\label{lem:heat-kernel-tail}
After decreasing $T$, there is $C<\infty$ depending on $(M,g)$ such
that the set
\[
 E_{t,x}=\{y\in M:d(x,y)\ge\sqrt t\,L_t\}
\]
satisfies
\begin{equation}\label{eq:heat-kernel-tail}
 \int_{E_{t,x}}\left(1+|\nabla_x\log p_t|^2
 +|\nabla_y\log p_t|^2\right)p_t(x,y)\,d\operatorname{vol}_g(y)
 \le Ct^6
\end{equation}
for every $x\in M$ and $0<t\le T$.
\end{lemma}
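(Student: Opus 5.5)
The plan is to insert the pointwise bounds \eqref{eq:global-gaussian} and \eqref{eq:global-log-gradient} into the integrand and reduce everything to one Gaussian tail integral over $E_{t,x}$. On $E_{t,x}$ we have $d(x,y)\ge\sqrt t\,L_t$. Since $L_t\ge1$ for $t$ small, this gives $t^{-1/2}\le d/t$. Hence
\[
 1+|\nabla_x\log p_t|^2+|\nabla_y\log p_t|^2\le 1+8C_1^2\,d(x,y)^2/t^2\le C\,t^{-1}\bigl(1+d(x,y)^2/t\bigr)
\]
for $0<t\le T\le1$. The integrand is therefore bounded by
\[
 C\,t^{-1-n/2}\bigl(1+d^2/t\bigr)e^{-c_0d^2/t}.
\]

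Next I split the exponential as $e^{-c_0d^2/t}=e^{-c_0d^2/(2t)}\cdot e^{-c_0d^2/(2t)}$. I bound $(1+s)e^{-c_0s/2}\le C$ for $s=d^2/t\ge0$. On $E_{t,x}$, the other factor satisfies
\[
 e^{-c_0d^2/(2t)}\le e^{-c_0L_t^2/2}=t^{(n/2+8)/2}=t^{n/4+4}.
\]
The integrand is then at most $C\,t^{-1-n/2+n/4+4}=C\,t^{3-n/4}$. This is not good enough when $n$ is large, so the split must be tuned.

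The fix is to spend almost all of the exponential on the tail. I would write
\[
 e^{-c_0d^2/t}=e^{-\epsilon c_0d^2/t}\,e^{-(1-\epsilon)c_0d^2/t}
\]
with a small fixed $\epsilon$, and absorb the polynomial factor into the first piece. The second piece is then at most $t^{(1-\epsilon)(n/2+8)}$. Combined with the prefactor $t^{-1-n/2}$ and $\operatorname{vol}_g(M)<\infty$, this gives the power
\[
 (1-\epsilon)(n/2+8)-n/2-1=7-\epsilon(n/2+8).
\]
Choosing $\epsilon=1/(n+16)$ makes this exponent $13/2\ge6$.

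A cleaner alternative avoids the crude volume bound and makes the $t^{-n/2}$ genuinely cancel. On $B(x,r)$ with $r$ below the injectivity radius, integrate in polar coordinates using the volume comparison $\operatorname{vol}(B(x,r))\le Cr^n$, which holds uniformly on a closed manifold. This gives
\[
 \int_{d\ge\sqrt tL_t}t^{-n/2}\Bigl(1+\tfrac{d^2}{t}\Bigr)e^{-c_0d^2/t}\le C\int_{L_t}^\infty(1+s^2)s^{n-1}e^{-c_0s^2}\,ds\le C\,L_t^{n}e^{-c_0L_t^2}=C\,L_t^n\,t^{n/2+8}.
\]
Here I substitute $s=d/\sqrt t$. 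On the region $d\ge r$, where the polar bound is not used, $e^{-c_0r^2/t}$ beats every power of $t$. Multiplying by the remaining $t^{-1}$ gives a bound $C\,t^{n/2+7}(\log(1/t))^{n/2}\le Ct^6$ after decreasing $T$.

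Either route works. The only real care needed is bookkeeping of the $t^{-n/2}$ factor against the choice of constant in $L_t$, which is exactly why $L_t$ contains $n/2+8$. All constants depend only on $C_0,c_0,C_1,n$, $\operatorname{vol}_g(M)$, and the volume-growth constant, so the bound is uniform in $x$.
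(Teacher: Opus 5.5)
Your proof is correct. Your first route is essentially the paper's argument: a pointwise Gaussian bound on $E_{t,x}$ times $\operatorname{vol}_g(M)$, with $L_t$ chosen so that $e^{-c_0L_t^2}=t^{n/2+8}$ cancels the factor $t^{-n/2}$.

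The only difference is how you handle the log-gradient factor. The paper does not use $d\ge\sqrt t\,L_t$ there. It uses $d(x,y)\le\diam(M,g)$ in \eqref{eq:global-log-gradient}, which gives $|\nabla_x\log p_t|^2+|\nabla_y\log p_t|^2\le Ct^{-2}$ uniformly. It can then spend the whole exponential on the tail, so $p_t\le C_0t^8$ on $E_{t,x}$ and the integral is at most $Ct^{-2}\cdot t^8=Ct^6$, with no $\epsilon$-splitting. You instead replace one factor $t^{-1}$ by the weight $1+d^2/t$, and you give up a sliver of the exponential to absorb it, ending at $t^{13/2}$. Both suffice, and your diagnosis of why the even split fails for large $n$ is right.

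Your polar-coordinate alternative is also valid and sharper. There the $t^{-n/2}$ is cancelled by the shell volume rather than by the exponential, which yields $Ct^{n/2+7}(\log(1/t))^{n/2}$. However, it needs a uniform small-ball volume or polar-Jacobian bound, plus monotonicity of $(1+s)e^{-c_0s}$ for large $s$ if you argue by layer-cake. The paper's cruder argument avoids these, and the extra decay is not needed downstream: in \eqref{eq:global-residual-L2} the tail only has to be $O(t^2)$.
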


\begin{proof}
Decrease $T$ so that $T<1$.  Every $y\in E_{t,x}$ satisfies
$d(x,y)^2/t\ge L_t^2$, so the pointwise estimate
\[
 p_t(x,y)
 \le C_0t^{-n/2}e^{-c_0L_t^2}
 =C_0t^8,
 \qquad y\in E_{t,x},
\]
follows from the Gaussian bound~\eqref{eq:global-gaussian}.  Since $M$
has finite volume, integration over $E_{t,x}$ then gives
\begin{equation}\label{eq:tail-mass}
 \int_{E_{t,x}}p_t(x,y)\,d\operatorname{vol}_g(y)\le Ct^8.
\end{equation}
Since $T<1$, the estimate~\eqref{eq:global-log-gradient} and
$d(x,y)\le\diam(M,g)$ imply the uniform
bound
\[
 |\nabla_x\log p_t|+|\nabla_y\log p_t|
 \le Ct^{-1}.
\]
Squaring this bound shows that the integrand in
\eqref{eq:heat-kernel-tail} is at most a constant multiple of
$t^{-2}p_t(x,y)$.  Combining this with \eqref{eq:tail-mass} proves
\eqref{eq:heat-kernel-tail}.
\end{proof}

Fix $x\in M$ and $v\in T_xM$ with $|v|_g=1$.  We use the geodesic normal
coordinates $\xi=(\xi^1,\ldots,\xi^n)$ chosen in
Section~\ref{sec:first-correction}.  For $t>0$, set $z=t^{-1/2}\xi$ and
define
\[
 y_{t,x}\colon B(0,r_*/\sqrt t)\longrightarrow B_g(x,r_*),\qquad
 y_{t,x}(z)=\exp_x(\sqrt t\,z).
\]
Let $d\xi$ and $dz$ denote Lebesgue measure in the variables $\xi$ and $z$.
The Euclidean heat kernel based at the origin is
\[
 p_t^{\mathbb R^n}(0,\xi)
 =(4\pi t)^{-n/2}e^{-|\xi|^2/(4t)}.
\]
Under the substitution $\xi=\sqrt t\,z$, we have
$p_t^{\mathbb R^n}(0,\xi)\,d\xi=p_1^{\mathbb R^n}(0,z)\,dz$.
On $B(0,r_*/\sqrt t)$, define $\rho_{t,x}$ by
\[
 (y_{t,x}^{-1})_\#
 \left(\left.p_t(x,\cdot)\,d\operatorname{vol}_g\right|_{B_g(x,r_*)}\right)
 =\rho_{t,x}(z)p_1^{\mathbb R^n}(0,z)\,dz.
\]
To rescale the right-hand side of \eqref{eq:velocity-equation}, define
\[
 s_{t,x,v}(z):=\sqrt t\,
 \left[d_1\log p_t(x,y)(v)\right]_{y=y_{t,x}(z)}.
\]
Here $d_1$ differentiates in the first heat kernel variable with $y$ fixed,
and the substitution $y=y_{t,x}(z)$ is made afterward.  Since
$y_{t,x}(z)$ depends on $x$, differentiating after the substitution would
give a different quantity.  The density $\rho_{t,x}$, the inverse metric
coefficients $g^{ij}(\sqrt t\,z)$, and the right-hand side $s_{t,x,v}$ are
the quantities that enter the rescaled velocity equation derived in
\eqref{eq:rescaled-operator}.  The following lemma records their
expansions.
\begin{lemma}[Rescaled expansions]\label{lem:rescaled-expansions}
There are $T>0$ and $C<\infty$ depending on $(M,g)$, such that
the following conclusions hold for every $x\in M$ and every unit vector
$v\in T_xM$, whenever $0<t\le T$ and $|z|\le L_t$.
\begin{equation}\label{eq:rescaled-expansions}
\begin{aligned}
 \rho_{t,x}&=1+t\left(\frac16\operatorname{Scal}_g(x)
 -\frac1{12}\Ric_g(z,z)\right)+t^{3/2}r_t^\rho,
 \\
 g^{ij}(\sqrt t\,z)&=\delta^{ij}
 -\frac t3\Rm^i{}_{k}{}^j{}_{\mu}(x)z^kz^\mu+t^{3/2}r_t^{ij},
 \\
 s_{t,x,v}&=\frac12\langle v,z\rangle
 -\frac t6\Ric_g(v,z)+t^{3/2}r_t^s.
\end{aligned}
\end{equation}
The remainder functions satisfy
\begin{equation}\label{eq:parametrix-remainder-bound}
 |r_t^\rho|+|\partial r_t^\rho|
 +\max_{i,j}\bigl(|r_t^{ij}|+|\partial r_t^{ij}|\bigr)+|r_t^s|
 \le C(1+|z|)^3.
\end{equation}
Here $\partial$ denotes any first-order partial derivative with respect to
$z$.  After decreasing $T$ if necessary, the bounds
$1/2\le\rho_{t,x}\le2$ hold on the same region.
\end{lemma}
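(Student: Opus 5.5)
The plan is to read off all three expansions from three standard inputs: the Minakshisundaram--Pleijel parametrix for $p_t(x,y)$ near the diagonal, the normal-coordinate expansions of $g^{ij}$ and $\sqrt{\det g}$, and the first-variation formula for $d(x,y)^2$ in $x$. Write $\xi=\exp_x^{-1}(y)$, $u_0(x,y)=\theta(x,y)^{-1/2}$ where $\theta=\sqrt{\det g_{ij}(\xi)}$, and
\[
 p_t(x,y)=(4\pi t)^{-n/2}e^{-d(x,y)^2/(4t)}\bigl(u_0+tu_1+t^2u_2\bigr)+R_t(x,y).
\]
On $d(x,y)\le r_*$ the remainder satisfies $|R_t|+\sqrt t\,|\nabla_xR_t|\le Ct^{3-n/2}$ (after one more parametrix term if needed), uniformly in $x$ by compactness. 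The first-order coefficient is $u_1(x,x)=\tfrac16\operatorname{Scal}_g(x)$, and $u_0,u_1$ are smooth with uniformly bounded derivatives.

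For $\rho_{t,x}$: the push-forward of $d\operatorname{vol}_g=\theta\,d\xi$ gives
\[
 \rho_{t,x}(z)=\theta(\sqrt t z)^{1/2}\bigl(1+tu_1(x,y)+\dots\bigr)+(\text{remainder}).
\]
Using $\theta(\xi)=1-\tfrac16\Ric(\xi,\xi)+O(|\xi|^3)$, we get $\theta^{1/2}=1-\tfrac1{12}t\Ric(z,z)+t^{3/2}O(|z|^3)$. Also $tu_1(x,y)=\tfrac t6\operatorname{Scal}(x)+t^{3/2}O(|z|)$. Every Taylor remainder in $\sqrt t z$ contributes $t^{3/2}O((1+|z|)^3)$. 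The parametrix remainder contributes $t^{3}e^{|z|^2/4}$ after multiplying by $(4\pi)^{n/2}e^{|z|^2/4}$. On $|z|\le L_t$ we have $e^{|z|^2/4}\le t^{-(n/2+8)/(4c_0)}$. This power is harmless only if we take enough parametrix terms $N$ so that $t^{N}$ absorbs it. I will therefore fix $N$ depending on $c_0,n$, which is the one point requiring care, since $c_0$ could be small. The $z$-derivative of the remainder is handled the same way: differentiating in $z$ gives a factor $\sqrt t$ times a $\xi$-derivative, which is fine. The $g^{ij}$ expansion is the classical
\[
 g^{ij}(\xi)=\delta^{ij}-\tfrac13\Rm^i{}_k{}^j{}_\mu\xi^k\xi^\mu+O(|\xi|^3),
\]
with a smooth remainder, so substituting $\xi=\sqrt tz$ gives the second line of \eqref{eq:rescaled-expansions}. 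With the paper's sign convention this needs a sign check, which I would do against the round sphere. Finally, $1/2\le\rho\le2$ follows from $t|z|^2\le tL_t^2\to0$.

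For $s_{t,x,v}$, differentiate $\log p_t$ in $x$ with $y$ fixed:
\[
 d_1\log p_t(v)=-\frac{d_1(d^2)(v)}{4t}+d_1\log(u_0+tu_1+\dots)(v)+(\text{remainder}).
\]
Since $d_1(d^2)(v)=-2\langle v,\xi\rangle$ exactly (the gradient of $d^2/2$ at $x$ is $-\exp_x^{-1}y$), the first term equals $\langle v,\xi\rangle/(2t)$, and multiplying by $\sqrt t$ gives $\tfrac12\langle v,z\rangle$. Next, $\log u_0=-\tfrac12\log\theta(x,y)$, and by symmetry of the Van Vleck determinant,
\[
 \log\theta(x,y)=-\tfrac16\Ric_x(\xi,\xi)+O(|\xi|^3).
\]
Its $x$-derivative along $v$ at fixed $y$ is computed using $d_1\xi(v)=-v+O(|\xi|^2)$. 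This gives
\[
 d_1\log u_0(v)=\tfrac1{12}\cdot2\Ric(\xi,-v)+O(|\xi|^2)=-\tfrac16\Ric(v,\xi)+O(|\xi|^2),
\]
and $\sqrt t$ times this is $-\tfrac t6\Ric(v,z)+t^{3/2}O(|z|^2)$. The term $t\,d_1u_1$ contributes $\sqrt t\cdot t\cdot O(1)=O(t^{3/2})$. The parametrix remainder in $x$-derivative is controlled as above, using $p_t\ge ct^{-n/2}e^{-|z|^2/4}$ on the region.

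The main obstacle is making the remainders uniform on the whole region $|z|\le L_t$ rather than on bounded $z$. The exponential factor $e^{|z|^2/4}$ from dividing by the Gaussian forces a high-order parametrix with $x$-derivative bounds. I would handle this by taking $N$ large, in terms of $n$ and $c_0$, and citing the standard derivative estimates for the parametrix remainder on compact manifolds.
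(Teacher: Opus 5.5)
Your proposal is correct, and its computational core matches the paper's. Both write $\rho_{t,x}=\mathfrak j_x(\sqrt t\,z)\,\alpha_t$ with $\alpha_t=p_t/p_t^{\mathbb R^n}(0,\xi)$. Both use $u_0=\mathfrak j_x^{-1/2}$, $u_1(x,x)=\frac16\operatorname{Scal}_g(x)$, the normal-coordinate expansions of $\mathfrak j_x$ and $g^{ij}$, the exact first variation $d_1(d^2)(v)=-2\langle v,\xi\rangle$, and $D_s\xi=-v+O(|\xi|)$ to get $d_1\log u_0(v)=-\frac16\Ric_g(v,\xi)+O(|\xi|^2)$.

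The genuine difference is how you make the remainder uniform on $|z|\le L_t$. The paper cites Ludewig's strong short-time asymptotics. These bound $\alpha_t-u_0-tu_1$, together with its first $x$- and $y$-derivatives, by $Ct^2$. In other words, the error is controlled relative to the Gaussian on a fixed neighborhood of the diagonal, so no exponential factor appears and two parametrix terms suffice.

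You instead use only the classical Minakshisundaram--Pleijel remainder, which is an absolute $C^1$ bound. You observe that dividing by the Gaussian costs at most $e^{L_t^2/4}=t^{-(n/2+8)/(4c_0)}$. This is a fixed power of $1/t$ precisely because $L_t$ grows like $\sqrt{\log(1/t)}$. Taking the number $N$ of parametrix terms large in terms of $n$ and $c_0$ absorbs it. The same estimate supplies the lower bound $p_t\ge ct^{-n/2}e^{-|z|^2/4}$ you need for $d_1\log p_t$.

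Your route is more elementary: only the textbook parametrix estimate in $C^1$ of both variables is needed, and it makes transparent why the logarithmic cutoff $L_t$ is compatible with the expansion. The price is an $N$ depending on $c_0$ and bookkeeping for the extra terms $t^ku_k$, $k\ge2$, which all contribute $O(t^2)$ in $C^1$. The paper's route is shorter and gives relative control on a whole neighborhood of the diagonal, not just the logarithmic region.

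One minor slip: $\rho_{t,x}=\theta(u_0+tu_1+\cdots)=\theta^{1/2}+t\,\theta u_1+\cdots$, not $\theta^{1/2}(1+tu_1+\cdots)$. The discrepancy is $O(t^2|z|^2)$ and does not affect the expansion.
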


The expansion of $\rho_{t,x}$ follows from the local heat kernel parametrix
and the standard normal-coordinate expansion of the volume form.  The
expansion of $g^{ij}$ is the standard inverse-metric expansion in normal
coordinates.  The third expansion is obtained by differentiating the
logarithm of the heat kernel parametrix in the first variable while keeping
the second variable fixed.  The proof, including the uniform remainder
estimates, is given in Appendix~\ref{app:rescaled-expansions}.

\section{Proof of the first-order approximation}
\label{sec:residual}

\begin{proof}[Proof of Proposition~\ref{prop:residual}]
Fix $x\in M$ and $v\in T_xM$ with $|v|_g=1$.
By \eqref{eq:velocity-equation}, the residual obtained by replacing
$\varphi_{t,x,v}$ with $f_t$ is
\begin{equation}\label{eq:residual-def}
 \varepsilon_t(y)
 =-\frac1{p_t(x,y)}\nabla_y\!\cdot(p_t(x,y)\nabla_yf_t)
  -d_1\log p_t(x,y)(v)
 =-\frac1{p_t}\nabla_y\!\cdot\bigl(p_t\nabla_y(f_t-\varphi_{t,x,v})\bigr).
\end{equation}
We split $M$ into the ball where $d(x,y)<\sqrt t\,L_t$ and its complement.
We estimate $\varepsilon_t$ on the ball using parabolic rescaling and the
local expansions from Lemma~\ref{lem:rescaled-expansions}, and on the
complement using the heat kernel tail estimate from
Lemma~\ref{lem:heat-kernel-tail}.
Define $u\colon B(0,r_*/\sqrt t)\to\mathbb R$ by
$u(z)=t^{-1/2}f_t(y_{t,x}(z))$.  From the coordinate expression for
divergence, we obtain
\begin{equation}\label{eq:rescaled-operator}
 \mathcal A_tu(z)
 :=\sqrt t\left[-\frac1{p_t}\nabla_y\!\cdot
 (p_t\nabla_yf_t)\right](y_{t,x}(z))
 =-\frac1{\rho_{t,x}p_1^{\mathbb R^n}(0,z)}\partial_j\!\left(
 \rho_{t,x}p_1^{\mathbb R^n}(0,z)g^{ij}(\sqrt t\,z)\partial_i u\right).
\end{equation}
Since $\sqrt t\,L_t\to0$, after decreasing $T$ we have
$d(x,y_{t,x}(z))=\sqrt t\,|z|\le r_0$ whenever $0<t\le T$ and
$|z|\le L_t$.  Hence $\chi_x(y_{t,x}(z))=1$ throughout this region, and
\[
 u(z)=\langle v,z\rangle
 -\frac43t\langle\Ric_g^\#v,z\rangle.
\]
Equations~\eqref{eq:residual-def} and \eqref{eq:rescaled-operator} give
the rescaled residual
\begin{equation}\label{eq:rescaled-residual}
 \widehat\varepsilon_t(z)
 :=\sqrt t\,\varepsilon_t(y_{t,x}(z))
 =\mathcal A_tu(z)-s_{t,x,v}(z).
\end{equation}
Applying the product rule in \eqref{eq:rescaled-operator} and using
$\partial_j\log p_1^{\mathbb R^n}(0,z)=-z_j/2$, we obtain
\begin{equation}\label{eq:rescaled-operator-product}
\begin{aligned}
 \mathcal A_tu
 &=-\partial_j(g^{ij}\partial_i u)
 -g^{ij}\partial_i u\,
   \partial_j\log\!\left(\rho_{t,x}p_1^{\mathbb R^n}(0,z)\right)\\
 &=-\partial_j(g^{ij}\partial_i u)
 +\frac12z_jg^{ij}\partial_i u
 -g^{ij}\partial_i u\,\partial_j\log\rho_{t,x},
\end{aligned}
\end{equation}
where $g^{ij}=g^{ij}(\sqrt t\,z)$.  By the inverse metric expansion in
\eqref{eq:rescaled-expansions}, we have
\[
 g^{ij}(\sqrt t\,z)\partial_i u
 =v^j+t\left(-\frac43(\Ric_g^\#v)^j
 -\frac13\Rm^i{}_{k}{}^j{}_{\mu}(x)v_i z^kz^\mu\right)
 +O\bigl(t^{3/2}(1+|z|)^3\bigr).
\]
The density expansion in \eqref{eq:rescaled-expansions} also shows that
\[
 -\partial_j\log\rho_{t,x}
 =\frac t6\Ric_{j\mu}(x)z^\mu
 +O\bigl(t^{3/2}(1+|z|)^4\bigr).
\]
We substitute these two expansions into
\eqref{eq:rescaled-operator-product}.  The derivative bounds in
\eqref{eq:parametrix-remainder-bound} allow us to differentiate the
remainders.  This gives
\begin{align}
 \mathcal A_tu
 &=\frac12\langle v,z\rangle
 +t\Bigl[-\partial_j\!\left(
 -\frac43(\Ric_g^\#v)^j
 -\frac13\Rm^i{}_{k}{}^j{}_{\mu}(x)v_i z^kz^\mu\right)\notag\\
 &\quad
 +\frac12z_j\left(-\frac43(\Ric_g^\#v)^j
 -\frac13\Rm^i{}_{k}{}^j{}_{\mu}(x)v_i z^kz^\mu\right)
 +\frac16\Ric_g(v,z)\Bigr]
 +O\bigl(t^{3/2}(1+|z|)^4\bigr).
 \label{eq:weighted-divergence-expansion}
\end{align}
Here all curvature components are evaluated at the fixed point $x$.
Antisymmetry in the last two curvature indices and
$\Rm^i{}_{j}{}^j{}_{\mu}(x)=\Ric^i{}_{\mu}(x)$ imply
\begin{align*}
 z_j\Rm^i{}_{k}{}^j{}_{\mu}(x)v_i z^kz^\mu&=0,\\
 \partial_j\left(-\frac13\Rm^i{}_{k}{}^j{}_{\mu}(x)v_i z^kz^\mu\right)
 &=-\frac13\left(
 \Rm^i{}_{j}{}^j{}_{\mu}(x)v_i z^\mu
 +\Rm^i{}_{k}{}^j{}_{j}(x)v_i z^k\right)=-\frac13\Ric_g(v,z).
\end{align*}
Equation~\eqref{eq:weighted-divergence-expansion} therefore reduces to
\[
 \mathcal A_tu(z)
 =\frac12\langle v,z\rangle-\frac t6\Ric_g(v,z)
 +O\bigl(t^{3/2}(1+|z|)^4\bigr).
\]
Comparison with the third line of \eqref{eq:rescaled-expansions} using
\eqref{eq:rescaled-residual} and \eqref{eq:parametrix-remainder-bound} yields
\[
 |\widehat\varepsilon_t(z)|\le Ct^{3/2}(1+|z|)^4.
\]
This holds whenever $|z|\le L_t$.
We have
$|\varepsilon_t(y_{t,x}(z))|^2=t^{-1}|\widehat\varepsilon_t(z)|^2$.
Lemma~\ref{lem:rescaled-expansions} also gives $\rho_{t,x}\le2$.
Integration against $\rho_{t,x}(z)p_1^{\mathbb R^n}(0,z)\,dz$ therefore
yields
\begin{equation}\label{eq:central-residual-L2}
\begin{aligned}
 \int_{|z|\le L_t}
 |\varepsilon_t(y_{t,x}(z))|^2
 \rho_{t,x}(z)p_1^{\mathbb R^n}(0,z)\,dz
 &\le Ct^2\int_{|z|\le L_t}
 (1+|z|)^8p_1^{\mathbb R^n}(0,z)\,dz\\
 &\le Ct^2\int_{\mathbb R^n}
 (1+|z|)^8p_1^{\mathbb R^n}(0,z)\,dz
 \le Ct^2.
\end{aligned}
\end{equation}
The identity $f_t=f_0+tf_1$ and \eqref{eq:uniform-trial-bounds} with $m=2$
imply
$\sup_M(|\nabla f_t|+|\Delta f_t|)\le C$.
Equation~\eqref{eq:residual-def} can be written as
\[
 \varepsilon_t
 =-\Delta f_t-\langle\nabla_y\log p_t,\nabla f_t\rangle
 -d_1\log p_t(x,y)(v).
\]
Since $|v|_g=1$, this expression and the preceding uniform bound yield
\[
 |\varepsilon_t|^2
 \le C\left(1+|\nabla_x\log p_t|^2
 +|\nabla_y\log p_t|^2\right).
\]
Lemma~\ref{lem:heat-kernel-tail} implies
\[
 \int_{E_{t,x}}|\varepsilon_t|^2p_t(x,y)
 \,d\operatorname{vol}_g(y)\le Ct^6.
\]
After decreasing $T$, the map $y_{t,x}$ identifies $M\setminus E_{t,x}$
with $\{|z|<L_t\}$.  Combining this tail estimate with
\eqref{eq:central-residual-L2}, we obtain
\begin{equation}\label{eq:global-residual-L2}
 P_t(|\varepsilon_t|^2)(x)\le Ct^2.
\end{equation}
Fix $h\in C^\infty(M)$.  Multiplying \eqref{eq:residual-def} by
$h(y)p_t(x,y)$ and integrating by parts in $y$ over $M$, we obtain
\[
 \mathcal B_t(f_t,h)-\mathcal L_t(h)
 =P_t(\varepsilon_th)(x).
\]
Since $M$ is closed, the last expression in \eqref{eq:residual-def} and the
divergence theorem imply
\[
 P_t(\varepsilon_t)(x)
 =-\int_M\nabla_y\!\cdot\bigl(p_t\nabla_y
 (f_t-\varphi_{t,x,v})\bigr)\,d\operatorname{vol}_g(y)=0.
\]
Hence
\[
 P_t(\varepsilon_th)(x)
 =P_t\bigl(\varepsilon_t(h-P_th(x))\bigr)(x).
\]
Choose $K\ge0$ such that $\Ric_g\ge-Kg$.  The Bakry--\'Emery gradient
estimate \cite[Theorem~2]{vonrenesse2005transport} and Jensen's
inequality imply that every $f\in C^\infty(M)$ satisfies
\begin{equation}\label{eq:bakry-emery-gradient}
 |\nabla P_tf|^2\le e^{2Kt}P_t\bigl(|\nabla f|^2\bigr).
\end{equation}
For $0\le s\le t$ we have
\[
 \frac{d}{ds}P_s\bigl((P_{t-s}h)^2\bigr)(x)
 =2P_s\bigl(|\nabla P_{t-s}h|^2\bigr)(x)
 \le2e^{2K(t-s)}P_sP_{t-s}(|\nabla h|^2)(x)
 =2e^{2K(t-s)}\mathcal B_t(h,h).
\]
Integrating from $0$ to $t$, we obtain
\begin{equation}\label{eq:heat-poincare-bound}
 \begin{aligned}
 P_t\bigl(|h-P_th(x)|^2\bigr)(x)
 &=P_t(h^2)(x)-(P_th(x))^2\\
 &\le2\left(\int_0^t e^{2Ks}\,ds\right)\mathcal B_t(h,h)\\
 &\le2te^{2Kt}\mathcal B_t(h,h).
 \end{aligned}
\end{equation}
Cauchy--Schwarz together with \eqref{eq:global-residual-L2} and
\eqref{eq:heat-poincare-bound} yields
\begin{align*}
 |\mathcal B_t(f_t,h)-\mathcal L_t(h)|
 &=|P_t(\varepsilon_th)(x)|\\
 &=\bigl|P_t\bigl(\varepsilon_t(h-P_th(x))\bigr)(x)\bigr|\\
 &\le P_t(|\varepsilon_t|^2)(x)^{1/2}
 P_t\bigl(|h-P_th(x)|^2\bigr)(x)^{1/2}\\
 &\le Ct^{3/2}\mathcal B_t(h,h)^{1/2}.
\end{align*}
This proves \eqref{eq:UR}.
\end{proof}

\begin{proof}[Proof of Theorem~\ref{thm:first-correction}]
The velocity potential satisfies
$\mathcal B_t(\varphi_{t,x,v},h)=\mathcal L_t(h)$ for every
$h\in C^\infty(M)$.
Set $e=f_t-\varphi_{t,x,v}$.  Applying
Proposition~\ref{prop:residual} with $h=e$ yields
\[
 \mathcal B_t(e,e)=\mathcal B_t(f_t,e)-\mathcal L_t(e)
 \le Ct^{3/2}\mathcal B_t(e,e)^{1/2}.
\]
If $\mathcal B_t(e,e)=0$, there is nothing to prove.  Otherwise, dividing
by its square root and squaring yields
\[
 \mathcal B_t(f_t-\varphi_{t,x,v},f_t-\varphi_{t,x,v})
 \le Ct^3,
\]
which is \eqref{eq:energy-error}.
\end{proof}

\section{From the velocity potential to the metric}

The proof of \eqref{eq:main-GM} has two parts.  The first is the exact
quadratic identity below, which transfers the approximation of the velocity
potential to the metric.  The second part expands the variational functional
in \eqref{eq:gm-definition} at $f_t$.

Fix $x\in M$ and $v\in T_xM$ with $|v|_g=1$.  Let
$f_t=f_0+tf_1$ be the approximate velocity potential constructed in
Section~\ref{sec:first-correction} for this $x$ and $v$.
Using the forms in \eqref{eq:BL-forms}, define $\mathcal J_t$ on
$C^\infty(M)$ by
\begin{equation}\label{eq:J-functional}
 \mathcal J_t(h):=2\mathcal L_t(h)-\mathcal B_t(h,h)
 =2d(P_th)_x(v)-P_t(|\nabla h|^2)(x).
\end{equation}
This is the variational functional in \eqref{eq:gm-definition}.  Its
supremum over $C^\infty(M)$ equals $\widetilde g_t(v,v)$ and is attained at
$\varphi_{t,x,v}$.

\begin{lemma}[Quadratic deficit]\label{lem:quadratic-deficit}
Every $h\in C^\infty(M)$ satisfies
\begin{equation}\label{eq:exact-deficit}
 \widetilde g_t(v,v)-\mathcal J_t(h)
 =P_t(|\nabla\varphi_{t,x,v}-\nabla h|^2)(x).
\end{equation}
\end{lemma}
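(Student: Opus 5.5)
The identity follows from the bilinear algebra already displayed before \eqref{eq:gm-definition}. The plan is to expand $\mathcal J_t(h)$ using the weak form of the velocity equation and complete the square.

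First, I record that $\mathcal L_t(h)=\mathcal B_t(\varphi_{t,x,v},h)$ for every $h\in C^\infty(M)$. This is the definition of $\mathcal L_t$ in \eqref{eq:BL-forms}. Concretely, multiply \eqref{eq:velocity-equation} by $h$ and integrate by parts over the closed manifold $M$. This gives
\[
\int_M\langle\nabla\varphi_{t,x,v},\nabla h\rangle p_t(x,y)\,d\operatorname{vol}_g(y)=\int_M h(y)\,d_1p_t(x,y)(v)\,d\operatorname{vol}_g(y).
\]
The right-hand side equals $d(P_th)_x(v)$ by differentiating under the integral sign, which is justified by the smoothness of $p_t$ for fixed $t>0$.

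Next, I write $\varphi=\varphi_{t,x,v}$ and expand using the bilinearity and symmetry of $\mathcal B_t$:
\[
\mathcal B_t(\varphi-h,\varphi-h)=\mathcal B_t(\varphi,\varphi)-2\mathcal B_t(\varphi,h)+\mathcal B_t(h,h)=\mathcal B_t(\varphi,\varphi)-\mathcal J_t(h).
\]
By \eqref{eq:gm-definition}, $\mathcal B_t(\varphi,\varphi)=P_t(|\nabla\varphi|^2)(x)=\widetilde g_t(v,v)$. The left-hand side is $P_t(|\nabla\varphi-\nabla h|^2)(x)$ by the definition of $\mathcal B_t$. Together these give \eqref{eq:exact-deficit}.

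There is no real obstacle here. The only point needing care is that the identity is exact and holds for all smooth $h$, with no additive-constant or normalization issue. That is clear because only gradients enter. It also requires the integration by parts to produce no boundary terms, which holds since $M$ is closed.
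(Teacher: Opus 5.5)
Your proposal is correct and follows essentially the same route as the paper: you substitute $\mathcal L_t(h)=\mathcal B_t(\varphi_{t,x,v},h)$ into $\mathcal J_t(h)$ and complete the square to get $\mathcal B_t(\varphi_{t,x,v}-h,\varphi_{t,x,v}-h)$. The extra integration-by-parts check you include is the justification the paper gives when it introduces $\mathcal L_t$ in \eqref{eq:BL-forms}.
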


\begin{proof}
Since $\mathcal L_t(h)=\mathcal B_t(\varphi_{t,x,v},h)$, the left-hand
side equals
$\mathcal B_t(\varphi_{t,x,v},\varphi_{t,x,v})
-2\mathcal B_t(\varphi_{t,x,v},h)+\mathcal B_t(h,h)$.
This is $\mathcal B_t(\varphi_{t,x,v}-h,\varphi_{t,x,v}-h)$, which is
the right-hand side of \eqref{eq:exact-deficit}.
\end{proof}

\begin{lemma}[Derivatives of $f_0$ at $x$]\label{lem:normal-jets}
At $x$ the derivatives satisfy
\[
 \nabla f_0=v,\qquad \nabla^2f_0=0,\qquad
 \nabla\Delta f_0=-\frac23\Ric_g^\#v,\qquad
 |\nabla^3f_0|^2=\frac13\mathcal Q_g(v,v).
\]
\end{lemma}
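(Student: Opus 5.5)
We work in the geodesic normal coordinates $\xi$ centered at $x$ from Section~\ref{sec:first-correction}. Since $\chi_x\equiv1$ on $B_g(x,r_0)$, the function $f_0$ coincides there with the coordinate-linear function $v_l\xi^l$. So all jets at $x$ are jets of a linear function in normal coordinates, and the plan is to express them through the Christoffel symbols and their first derivatives at the origin.

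\emph{First and second derivatives.} We have $\partial_if_0=v_i$, which gives $\nabla f_0=v$ at $x$. Next, $(\nabla^2f_0)_{ij}=\partial_i\partial_jf_0-\Gamma^k_{ij}\partial_kf_0=-\Gamma^k_{ij}v_k$. This vanishes at $x$ because $\Gamma(0)=0$ in normal coordinates.

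\emph{Third derivatives.} Differentiate once more. At $x$ every term containing an undifferentiated $\Gamma$ drops out, so
\[
 (\nabla^3f_0)_{kij}(x)=-\partial_k\Gamma^l_{ij}(0)\,v_l .
\]
Then insert the standard normal-coordinate identity
\[
 \partial_k\Gamma^l_{ij}(0)=-\tfrac13\bigl(\Rm_{kil}{}^{\!\!}{}_{j}+\Rm_{kjl i}\bigr)(x),
\]
written in the paper's index convention. The signs will be checked against the inverse-metric expansion in Lemma~\ref{lem:rescaled-expansions}: differentiating $g^{ij}$ and using the Koszul formula gives $\partial_k\Gamma$ directly. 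Equivalently, one can take the symmetrized-in-$(i,j)$ curvature expression that follows from $g_{ij}=\delta_{ij}-\frac13\Rm_{ikj\mu}\xi^k\xi^\mu+O(|\xi|^3)$.

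\emph{The gradient of $\Delta f_0$.} Tracing over $i=j$ gives
\[
 \nabla_k\Delta f_0(x)=-\sum_i\partial_k\Gamma^l_{ii}(0)v_l .
\]
The two curvature terms each contract to a Ricci term. For a cross-check we use the classical fact $\Delta\xi^l=-g^{ij}\Gamma^l_{ij}=-\frac23\Ric^l{}_k\xi^k+O(|\xi|^2)$. Hence $\nabla\Delta f_0=-\frac23\Ric_g^\#v$, in agreement with the value of $f_1$ already forced in the residual computation.

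\emph{The norm $|\nabla^3f_0|^2$.} Write $T_{kij}=\frac13(R_{\cdot}+R'_{\cdot})$, where $R$ and $R'$ are the two curvature terms contracted with $v$. Then
\[
 |\nabla^3f_0|^2=\tfrac19\bigl(|R|^2+|R'|^2+2\langle R,R'\rangle\bigr).
\]
Each square term equals $\mathcal Q_g(v,v)$ after applying the pair symmetries of $\Rm$. For the cross term, the first Bianchi identity gives $\sum\Rm_{kilj}\Rm_{kjmi}=\frac12\sum\Rm_{kilj}\Rm_{kiml}$-type relations, so the cross term equals $\frac12\mathcal Q_g(v,v)$. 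This yields $\frac19(2+1)\mathcal Q_g=\frac13\mathcal Q_g(v,v)$.

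\emph{Main obstacle.} The hardest step is the index bookkeeping for the cross term and the overall signs. The Bianchi manipulation $\sum_{a,b,c}\Rm_{abc X}\Rm_{acbY}=\frac12\sum\Rm_{abcX}\Rm_{abcY}$ must be verified carefully under the stated convention for $\Rm$. As a sanity check I would test the formula on the round sphere, where all quantities are explicit.
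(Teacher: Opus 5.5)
Your proof is correct. It reaches the third-jet formula by a different route from the paper, and the rest matches.

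\textbf{Third jet.} You compute $(\nabla^3f_0)_{kij}(x)=-\partial_k\Gamma^l_{ij}(0)\,v_l$ in normal coordinates. You then use the classical formula $\partial_k\Gamma^l_{ij}(0)=-\frac13(\Rm_{kilj}+\Rm_{kjli})$. This formula is correct in the paper's convention. Since $\Rm_{kilj}=-\Rm_{kijl}$, it gives exactly the paper's \eqref{eq:third-jet}.

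The paper never uses Christoffel symbols. It uses that $f_0$ is linear along radial geodesics, so $\nabla^3f_0(w,w,w)=0$. It polarizes this to $\Theta_{aij}+\Theta_{ija}+\Theta_{jai}=0$ and solves for $\Theta$ with the Ricci identity. That argument is self-contained, and its only sign-sensitive input is the Ricci identity. Your route is shorter and reuses the metric expansion already assumed in Appendix~\ref{app:rescaled-expansions}. However, it moves the real content into the formula for $\partial\Gamma(0)$ and demands careful sign tracking.

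\textbf{Sign slip.} Your ``equivalent'' metric expansion has the wrong sign. Inverting the paper's $g^{ij}$ expansion gives $g_{ij}=\delta_{ij}+\frac13\Rm_{ikj\mu}\xi^k\xi^\mu+O(|\xi|^3)$ in this convention. You can check this on the unit sphere, where $\Rm_{ikj\mu}\xi^k\xi^\mu=\xi^i\xi^j-\delta_{ij}|\xi|^2$ reproduces $\sin^2r/r^2\approx1-r^2/3$ transversally. Deriving $\partial\Gamma(0)$ from the formula you wrote would flip its sign and give $\nabla\Delta f_0=+\frac23\Ric_g^\#v$. Your direct $\partial\Gamma$ formula and your $\Delta\xi^l$ cross-check are the correct ones.

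\textbf{Norm.} Your norm computation is the paper's. Your square terms are its $A$, and your cross term is its $B$. The index expression for the cross term in your body text is garbled, but the identity in your last paragraph is the right one, and it takes two lines. By the first Bianchi identity, $\Rm_{acbY}=\Rm_{abcY}+\Rm_{bcaY}$, so $B=A+\sum_{a,b,c}\Rm_{abcX}\Rm_{bcaY}$. Write $\Rm_{abcX}=-\Rm_{bacX}$ and relabel $a\leftrightarrow b$; the last sum becomes $-B$. Hence $2B=A$, and the total is $\frac19(1+1+1)\mathcal Q_g(v,v)=\frac13\mathcal Q_g(v,v)$. The paper gets the same identity by squaring the Bianchi sum instead.
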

The proof of Lemma~\ref{lem:normal-jets} is given in
Appendix~\ref{app:normal-jets}.

\begin{proposition}[Expansion of the variational functional]
\label{prop:test-functional-expansion}
There are $T>0$ and $C<\infty$ depending on $(M,g)$ such that
\[
 \left|\mathcal J_t(f_t)-g(v,v)+2t\Ric_g(v,v)-t^2A_g(v,v)\right|
 \le Ct^3
\]
for every $x\in M$, every $v\in T_xM$ with $|v|_g=1$, and every
$0<t\le T$.
Here $A_g$ is defined in \eqref{eq:A-D-main}.
\end{proposition}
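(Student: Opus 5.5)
The plan is to expand both pieces of $\mathcal J_t(f_t)=2\,d(P_tf_t)_x(v)-P_t(|\nabla f_t|^2)(x)$ by the short-time Taylor expansion of the heat semigroup. I will then read off the coefficients of $1,t,t^2$ from jets at $x$, using Lemma~\ref{lem:normal-jets} and Bochner-type identities.

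\textbf{Step 1 (semigroup expansion with uniform remainder).} For $F\in C^\infty(M)$, iterating $\partial_sP_sF=P_s\Delta F$ gives
\[
P_tF=F+t\Delta F+\tfrac{t^2}{2}\Delta^2F+\int_0^t\tfrac{(t-s)^2}{2}P_s\Delta^3F\,ds .
\]
Evaluating at $x$ gives a remainder bounded by $Ct^3\|\Delta^3F\|_\infty$. Differentiating the same identity in direction $v$ and using \eqref{eq:bakry-emery-gradient} in the form $\|\nabla P_sG\|_\infty\le e^{Ks}\|\nabla G\|_\infty$ gives
\[
d(P_tF)_x(v)=\langle\nabla(F+t\Delta F+\tfrac{t^2}{2}\Delta^2F),v\rangle(x)+O\bigl(t^3\|\nabla\Delta^3F\|_\infty\bigr).
\]
I apply these with $F=f_t$ and $F=|\nabla f_t|^2$. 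By \eqref{eq:uniform-trial-bounds} with $m=8$, all remainders are $O(t^3)$ uniformly in $x$ and $v$. Since $f_t=f_0+tf_1$, I collect powers of $t$ and discard all terms of order $t^3$ or higher.

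\textbf{Step 2 (orders $1$ and $t$).} Write $\Gamma(f,h)=\langle\nabla f,\nabla h\rangle$ and $\Gamma_2(f)=|\nabla^2f|^2+\Ric(\nabla f,\nabla f)$, so that Bochner's formula reads $\Delta\Gamma(f)=2\Gamma_2(f)+2\Gamma(f,\Delta f)$.

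The order-$1$ term is $2\langle\nabla f_0,v\rangle-|\nabla f_0|^2=1$, using Lemma~\ref{lem:normal-jets}.

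The order-$t$ coefficient is
\[
2\langle\nabla\Delta f_0+\nabla f_1,v\rangle-\Delta|\nabla f_0|^2-2\Gamma(f_0,f_1),
\]
evaluated at $x$. At $x$ we have $\nabla f_1=-\tfrac43\Ric^\#v$ and $\nabla^2f_0=0$. Bochner then gives $\Delta|\nabla f_0|^2=2\Ric(v,v)-\tfrac43\Ric(v,v)=\tfrac23\Ric(v,v)$. The coefficient is therefore
\[
-4\Ric(v,v)-\tfrac23\Ric(v,v)+\tfrac83\Ric(v,v)=-2\Ric(v,v),
\]
as required.

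\textbf{Step 3 (order $t^2$; the main obstacle).} The $t^2$ coefficient is
\[
\bigl[\Gamma(f_0,\Delta^2f_0)-\tfrac12\Delta^2\Gamma(f_0)\bigr]+\bigl[2\Gamma(\Delta f_1,f_0)-2\Delta\Gamma(f_0,f_1)\bigr]-|\nabla f_1|^2
\]
at $x$, where the second bracket contains the cross terms in $f_1$. A naive evaluation would need fifth-order jets of $f_0$ in normal coordinates, which is the hard part. To avoid this I will apply the polarized identity $\Delta\Gamma(f,h)=\Gamma(\Delta f,h)+\Gamma(f,\Delta h)+2\Gamma_2(f,h)$ twice. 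This rewrites the first bracket as
\[
-\Delta\Gamma_2(f_0)-|\nabla\Delta f_0|^2-2\Gamma_2(f_0,\Delta f_0),
\]
and the cross terms as $-2\Gamma(f_1,\Delta f_0)-4\Gamma_2(f_0,f_1)$. These now involve only third-order jets of $f_0$ and second-order jets of $f_1$ (note $\nabla^2f_1(x)=0$).

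The remaining genuinely new quantity is
\[
\Delta\Gamma_2(f_0)(x)=2|\nabla^3f_0|^2+(\Delta\Ric)(v,v)+2\Ric(\Delta\nabla f_0,v).
\]
Here every term containing $\nabla^2f_0(x)=0$ drops, and $|\nabla^3f_0|^2=\tfrac13\mathcal Q_g(v,v)$ by Lemma~\ref{lem:normal-jets}. The Ricci commutation $\Delta\nabla f=\nabla\Delta f+\Ric^\#\nabla f$ gives $\Delta\nabla f_0(x)=\tfrac13\Ric^\#v$. This produces the $-\Delta\Ric_g$ and $-\tfrac23\mathcal Q_g$ terms of $A_g$.

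All other contributions are products of $\Ric^\#v$ with $\Ric$, using $\nabla\Delta f_0=-\tfrac23\Ric^\#v$ and $\nabla f_1=-\tfrac43\Ric^\#v$. They must sum to $2\Ric^2(v,v)$. Checking this coefficient bookkeeping, together with the uniform $C^8$ control of the remainders in Step 1, is where I expect the care to be needed.
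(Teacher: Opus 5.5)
Your proposal is correct and follows essentially the paper's route. You use the semigroup Taylor formula with the Bakry--\'Emery gradient bound for the remainders, then Bochner-type identities at $x$; your second application of polarized Bochner plays the role of the paper's product rule together with the contracted Ricci identity $\Delta\nabla\Delta f_0=\nabla\Delta^2f_0+\Ric_g^\#\nabla\Delta f_0$, cancelling the fifth-order term $d(\Delta^2f_0)_x(v)$. The bookkeeping you left open does close: the six $|\Ric_g^\#v|^2$ contributions are $-\frac23-\frac49+\frac43-\frac{16}{9}+\frac{16}{3}-\frac{16}{9}=2$, and $m=7$ already suffices in Step~1.
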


\begin{proof}
We prove the estimate for $0<t\le1$.  Fix $x\in M$ and a vector
$v\in T_xM$ with $|v|_g=1$. For every $F\in C^\infty(M)$, Taylor's formula applied to
$s\mapsto P_sF$ yields
\begin{equation}\label{eq:semigroup-Taylor}
 P_tF=F+t\Delta F+\frac{t^2}{2}\Delta^2F
 +\frac12\int_0^t(t-s)^2P_s\Delta^3F\,ds.
\end{equation}
After differentiating this identity on $M$ and evaluating at $x$, we obtain
\begin{equation}\label{eq:differentiated-semigroup-Taylor}
 d(P_tF)_x(v)=dF_x(v)+t\,d(\Delta F)_x(v)+\frac{t^2}{2}d(\Delta^2F)_x(v)
 +\frac12\int_0^t(t-s)^2d(P_s\Delta^3F)_x(v)\,ds.
\end{equation}
Choose $K\ge0$ such that $\Ric_g\ge-Kg$.
For every $G\in C^\infty(M)$ and $0\le s\le1$, the gradient estimate
\eqref{eq:bakry-emery-gradient} and the $L^\infty$-contractivity of $P_s$
imply
\[
 |\nabla P_sG|\le e^{Ks}\bigl(P_s(|\nabla G|^2)\bigr)^{1/2}
 \le e^{Ks}\|\nabla G\|_\infty\le C\|\nabla G\|_\infty.
\]
This bound and \eqref{eq:uniform-trial-bounds} with $m=7$ imply that each
of the quantities
\[
 \left|\int_0^t(t-s)^2d(P_s\Delta^3f_t)_x(v)\,ds\right|,
 \qquad
 \left|\int_0^t(t-s)^2
 P_s\Delta^3(|\nabla f_t|^2)(x)\,ds\right|
\]
is bounded by $C\int_0^t(t-s)^2\,ds\le Ct^3$.
All unmarked covariant derivatives below are evaluated at $x$.
Applying \eqref{eq:differentiated-semigroup-Taylor} and
\eqref{eq:semigroup-Taylor} to the respective terms of
\eqref{eq:J-functional} with $h=f_t$ and using the preceding remainder
estimates gives
\begin{equation}\label{eq:J-expansion}
 \mathcal J_t(f_t)=2d(f_t)_x(v)-|\nabla f_t|^2
 +t\left[2d(\Delta f_t)_x(v)-\Delta|\nabla f_t|^2\right]
 +t^2\left[d(\Delta^2f_t)_x(v)-\frac12\Delta^2|\nabla f_t|^2\right]
 +O(t^3).
\end{equation}
Since $\nabla f_t=v+t\nabla f_1$, we have
\begin{equation}\label{eq:J-zero}
 2d(f_t)_x(v)-|\nabla f_t|^2
 =2\langle v+t\nabla f_1,v\rangle-|v+t\nabla f_1|^2
 =|v|^2-t^2|\nabla f_1|^2.
\end{equation}
For every $h\in C^\infty(M)$, the Bochner formula
\begin{equation}\label{eq:Bochner-used}
 \frac12\Delta|\nabla h|^2
 =|\nabla^2h|^2+\langle\nabla h,\nabla\Delta h\rangle
 +\Ric_g(\nabla h,\nabla h)
\end{equation}
implies
\begin{equation}\label{eq:J-first}
 \begin{aligned}
 2d(\Delta h)_x(v)-\Delta|\nabla h|^2
 ={}&2\langle v,\nabla\Delta h\rangle-2\langle\nabla h,\nabla\Delta h\rangle
 -2|\nabla^2h|^2-2\Ric_g(\nabla h,\nabla h)\\
 ={}&2\langle v-\nabla h,\nabla\Delta h\rangle
 -2|\nabla^2h|^2-2\Ric_g(\nabla h,\nabla h).
 \end{aligned}
\end{equation}
Substituting $h=f_t$ into \eqref{eq:J-first} and using
Lemma~\ref{lem:normal-jets}, we obtain
\begin{equation}\label{eq:J-first-ft}
\begin{aligned}
 &2d(\Delta f_t)_x(v)-\Delta|\nabla f_t|^2\\
 &=2\langle v-\nabla f_t,\nabla\Delta f_t\rangle
   -2|\nabla^2f_t|^2-2\Ric_g(\nabla f_t,\nabla f_t)\\
 &=-2t\langle\nabla f_1,\nabla\Delta f_0+t\nabla\Delta f_1\rangle
   -2t^2|\nabla^2f_1|^2
   -2\Ric_g(v+t\nabla f_1,v+t\nabla f_1)\\
 &=-2\Ric_g(v,v)-2t\langle\nabla f_1,\nabla\Delta f_0\rangle
   -4t\langle\Ric_g^\#v,\nabla f_1\rangle+O(t^2).
\end{aligned}
\end{equation}
The identity $f_t-f_0=tf_1$ and the bound
\eqref{eq:uniform-trial-bounds} imply that
\begin{equation}\label{eq:J-second-replacement}
 d(\Delta^2f_t)_x(v)-\tfrac12\Delta^2|\nabla f_t|^2
 =d(\Delta^2f_0)_x(v)-\tfrac12\Delta^2|\nabla f_0|^2+O(t).
\end{equation}
After multiplication by $t^2$ in \eqref{eq:J-expansion}, the remainder
in \eqref{eq:J-second-replacement} is $O(t^3)$.  Applying $\Delta$ to
\eqref{eq:Bochner-used} with $h=f_0$ gives
\begin{equation}\label{eq:bochner-laplacian-decomposition}
 \frac12\Delta^2|\nabla f_0|^2
 =\Delta|\nabla^2f_0|^2
 +\Delta\langle\nabla f_0,\nabla\Delta f_0\rangle
 +\Delta\bigl(\Ric_g(\nabla f_0,\nabla f_0)\bigr).
\end{equation}
By Lemma~\ref{lem:normal-jets} and the product rule, the terms on the
right-hand side satisfy
\begin{equation}\label{eq:bochner-product-rules}
\begin{aligned}
 \Delta|\nabla^2f_0|^2&=2|\nabla^3f_0|^2,\\
 \Delta\langle\nabla f_0,\nabla\Delta f_0\rangle
 &=\langle\Delta\nabla f_0,\nabla\Delta f_0\rangle
   +\langle v,\Delta\nabla\Delta f_0\rangle,\\
 \Delta\bigl(\Ric_g(\nabla f_0,\nabla f_0)\bigr)
 &=(\Delta\Ric_g)(v,v)+2\Ric_g(\Delta\nabla f_0,v).
\end{aligned}
\end{equation}
All omitted product-rule terms contain $\nabla^2f_0$ and vanish at $x$.
The contracted Ricci identity
\[
 \Delta\nabla h=\nabla\Delta h+\Ric_g^\#(\nabla h)
\]
applied first to $h=f_0$ and then to $h=\Delta f_0$ gives
\[
 \Delta\nabla f_0=\nabla\Delta f_0+\Ric_g^\#v,
 \qquad
 \Delta\nabla\Delta f_0
 =\nabla\Delta^2f_0+\Ric_g^\#(\nabla\Delta f_0).
\]
Substituting these identities into the second and third lines of
\eqref{eq:bochner-product-rules} gives
\begin{align*}
 \Delta\langle\nabla f_0,\nabla\Delta f_0\rangle
 &=|\nabla\Delta f_0|^2+\langle v,\nabla\Delta^2f_0\rangle
   +2\langle\Ric_g^\#v,\nabla\Delta f_0\rangle,\\
 \Delta\bigl(\Ric_g(\nabla f_0,\nabla f_0)\bigr)
 &=(\Delta\Ric_g)(v,v)
   +2\langle\Ric_g^\#v,\nabla\Delta f_0\rangle
   +2|\Ric_g^\#v|^2.
\end{align*}
Using these two formulas and the first line of
\eqref{eq:bochner-product-rules} in
\eqref{eq:bochner-laplacian-decomposition}, the decomposition becomes
\[
 \begin{aligned}
 \frac12\Delta^2|\nabla f_0|^2
 ={}&d(\Delta^2f_0)_x(v)+2|\nabla^3f_0|^2+|\nabla\Delta f_0|^2\\
 &+4\langle\Ric_g^\#v,\nabla\Delta f_0\rangle
   +(\Delta\Ric_g)(v,v)+2|\Ric_g^\#v|^2.
 \end{aligned}
\]
The preceding identity is equivalent to
\begin{equation}\label{eq:J-second}
 d(\Delta^2f_0)_x(v)-\frac12\Delta^2|\nabla f_0|^2
 =-2|\nabla^3f_0|^2-|\nabla\Delta f_0|^2
 -4\langle\Ric_g^\#v,\nabla\Delta f_0\rangle
 -(\Delta\Ric_g)(v,v)-2|\Ric_g^\#v|^2.
\end{equation}

Combining \eqref{eq:J-expansion} with \eqref{eq:J-zero} and
\eqref{eq:J-first-ft} and then using \eqref{eq:J-second-replacement}
yields
\begin{align*}
 \mathcal J_t(f_t)
={}&|v|^2-2t\Ric_g(v,v)+t^2\bigl[d(\Delta^2f_0)_x(v)
 -\tfrac12\Delta^2|\nabla f_0|^2\\
 &-|\nabla f_1|^2-2\langle\nabla f_1,\nabla\Delta f_0\rangle
 -4\langle\Ric_g^\#v,\nabla f_1\rangle\bigr]+O(t^3).
\end{align*}
Lemma~\ref{lem:normal-jets} and \eqref{eq:f1-main} imply
\[
 \nabla\Delta f_0=-\frac23\Ric_g^\#v,\qquad
 |\nabla^3f_0|^2=\frac13\mathcal Q_g(v,v),\qquad
 \nabla f_1=-\frac43\Ric_g^\#v.
\]
The terms in \eqref{eq:J-second} involving $\Ric_g^\#v$ are
\[
 -|\nabla\Delta f_0|^2
 -4\langle\Ric_g^\#v,\nabla\Delta f_0\rangle
 -2|\Ric_g^\#v|^2
 =\left(-\frac49+\frac83-2\right)|\Ric_g^\#v|^2
 =\frac29|\Ric_g^\#v|^2.
\]
The contribution to the coefficient of $t^2$ from
\eqref{eq:J-zero} and $t$ times \eqref{eq:J-first-ft} is
\[
 -|\nabla f_1|^2-2\langle\nabla f_1,\nabla\Delta f_0\rangle
 -4\langle\Ric_g^\#v,\nabla f_1\rangle
 =\left(-\frac{16}{9}-\frac{16}{9}+\frac{16}{3}\right)
   |\Ric_g^\#v|^2
 =\frac{16}{9}|\Ric_g^\#v|^2.
\]
Hence, we have
\begin{align*}
 \mathcal J_t(f_t)
 ={}&g(v,v)-2t\Ric_g(v,v)+t^2\left[-(\Delta\Ric_g)(v,v)
 +2|\Ric_g^\#v|^2-\frac23\mathcal Q_g(v,v)\right]
 +O(t^3).
\end{align*}
Since $\Ric_g^2(v,v)=|\Ric_g^\#v|^2$, the coefficient of $t^2$ is
$A_g(v,v)$ by \eqref{eq:A-D-main}.
The preceding estimates and \eqref{eq:uniform-trial-bounds} show that the
remainders are uniform in $x$ and in unit vectors $v\in T_xM$.  This
proves the proposition.
\end{proof}

\begin{proof}[Proof of \eqref{eq:main-GM}]
Taking $h=f_t$ in Lemma~\ref{lem:quadratic-deficit} expresses
$\widetilde g_t(v,v)-\mathcal J_t(f_t)$ as the quantity bounded in
Theorem~\ref{thm:first-correction}.  Hence
\[
 \left|\widetilde g_t(v,v)-\mathcal J_t(f_t)\right|\le Ct^3
\]
uniformly in $x$ and the unit vector $v$.
Together with Proposition~\ref{prop:test-functional-expansion}, this proves
\eqref{eq:main-GM}.
\end{proof}

\begin{proof}[Proof of Corollary~\ref{cor:Wasserstein-action}]
By \eqref{eq:metric-speed-pullback}, the metric speed of
$s\mapsto\mu_{t,\gamma_s}$ is
$\sqrt{\widetilde g_t(\dot\gamma_s,\dot\gamma_s)}$ for almost every $s$.
Hence
\[
 \operatorname{Act}_2(\mu_{t,\gamma_\cdot})
 =\int_0^1\widetilde g_t(\dot\gamma_s,\dot\gamma_s)\,ds.
\]
Applying \eqref{eq:main-GM} to
$\dot\gamma_s/|\dot\gamma_s|_g$ and using homogeneity yields
\[
 \left|
 \bigl[\widetilde g_t-(g-2t\Ric_g+t^2A_g)\bigr]
 (\dot\gamma_s,\dot\gamma_s)
 \right|
 \le Ct^3|\dot\gamma_s|_g^2.
\]
The estimate is trivial when $\dot\gamma_s=0$.  Integrating over
$s\in[0,1]$ proves the stated expansion and remainder bound.
\end{proof}

\section{Comparison with Ricci flow}

\begin{lemma}[Ricci-flow evolution of the Ricci tensor]
\label{lem:ricci-evolution}
Let $g_t$ be the Ricci flow with initial metric $g$.  Then
\[
 \left.\partial_t\Ric_{g_t}\right|_{t=0}
 =\Delta\Ric_g+2\mathcal S_g-2\Ric_g^2,
\]
where $\mathcal S_g$ is the tensor defined in \eqref{eq:S-main}.
\end{lemma}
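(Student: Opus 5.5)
The plan is to combine the classical first-variation formula for the Ricci tensor with the velocity $\partial_tg_t|_{t=0}=-2\Ric_g$. I then commute covariant derivatives and use the contracted second Bianchi identity, so that all second derivatives of scalar curvature cancel. What remains is $\Delta\Ric_g$ plus algebraic curvature terms, which I identify with $2\mathcal S_g-2\Ric_g^2$ in the conventions fixed in Section~\ref{sec:intro}.

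\textbf{Step 1: first variation.} For a smooth family of metrics with $\partial_tg=h$, the standard formula in an orthonormal frame at a point is
\[
 \partial_t\Ric_{jk}
 =\frac12\bigl(\nabla^p\nabla_jh_{pk}+\nabla^p\nabla_kh_{jp}
 -\Delta h_{jk}-\nabla_j\nabla_k\operatorname{tr}_gh\bigr).
\]
It follows from $\partial_t\Gamma^k_{ij}=\frac12g^{kl}(\nabla_ih_{jl}+\nabla_jh_{il}-\nabla_lh_{ij})$ together with $\partial_t\Ric_{jk}=\nabla_p\partial_t\Gamma^p_{jk}-\nabla_j\partial_t\Gamma^p_{pk}$. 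Only the trace $\Ric_{jk}=\Rm^p{}_{pjk}$ enters, and I will double-check that this is consistent with the paper's convention $\Ric(X,Y)=\sum_a\langle\Rm(e_a,X)Y,e_a\rangle$.

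\textbf{Step 2: substitute the Ricci flow velocity.} With $h=-2\Ric_g$ and $\operatorname{tr}_gh=-2\operatorname{Scal}_g$, Step~1 gives
\[
 \partial_t\Ric_{jk}
 =\Delta\Ric_{jk}+\nabla_j\nabla_k\operatorname{Scal}
 -\nabla^p\nabla_j\Ric_{pk}-\nabla^p\nabla_k\Ric_{jp}.
\]

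\textbf{Step 3: commute and cancel.} Commuting derivatives gives
\[
 \nabla^p\nabla_j\Ric_{pk}
 =\nabla_j\nabla^p\Ric_{pk}+\Ric_j{}^{q}\Ric_{qk}
 -\Rm^{p}{}_{j}{}^{q}{}_{k}\Ric_{pq}
\]
up to sign conventions, and similarly with $j$ and $k$ exchanged. The contracted Bianchi identity $\nabla^p\Ric_{pk}=\frac12\nabla_k\operatorname{Scal}$ makes the two terms $\frac12\nabla_j\nabla_k\operatorname{Scal}$ cancel $\nabla_j\nabla_k\operatorname{Scal}$. What remains is
\[
 \Delta\Ric_g-2\Ric_g^2+2\,(\text{contraction of }\Rm\text{ with }\Ric).
\]
Here I use the symmetry of $\Ric$ and the pair symmetry of $\Rm$ to see that the two commutator contributions coincide. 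I then rewrite the contraction invariantly as $\sum_{a,b}\Ric(e_a,e_b)\langle\Rm(e_a,X)Y,e_b\rangle=\mathcal S_g(X,Y)$.

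\textbf{Main obstacle: signs.} The only delicate step is bookkeeping of signs in the commutator, since the paper's curvature convention fixes them. To guard against a sign error, I will test the final formula on the unit round sphere. There $\Rm(X,Y)Z=\langle Y,Z\rangle X-\langle X,Z\rangle Y$, so $\Ric_g=(n-1)g$, $\mathcal S_g=(n-1)^2g$ and $\Ric_g^2=(n-1)^2g$. The right-hand side of the lemma then vanishes, as it must: under Ricci flow the round metric only shrinks homothetically, and the Ricci tensor is scale invariant. The sphere check only constrains the relative sign of $\mathcal S_g$ and $\Ric_g^2$, because $\Delta\Ric_g$ vanishes there. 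So I will separately confirm the coefficient $+1$ of $\Delta\Ric_g$ from the linearization $\partial_t\Ric=-\frac12\Delta h+\dots$ with $h=-2\Ric_g$.
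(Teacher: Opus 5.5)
Your proposal is correct, but it takes a different route from the paper. The paper's proof is a one-line citation of the Ricci-flow evolution equation from \cite[Proposition~2.5.3]{topping2006lectures}, followed by a conversion of curvature conventions. You instead rederive that equation from the first-variation (Palatini) formula, the Ricci commutation identity, and the contracted Bianchi identity. Your version is self-contained and isolates the single convention-dependent step, the commutator. That step is exactly where converting a quoted formula can silently flip the sign of $\mathcal S_g$. The citation is shorter but leaves this bookkeeping to the reader.

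In the paper's index convention $\Rm_{abcd}=\langle\Rm_g(e_a,e_b)e_c,e_d\rangle$, the paper's Ricci identity from Appendix~\ref{app:normal-jets}, namely $\nabla_a\nabla_i\omega_j-\nabla_i\nabla_a\omega_j=-\sum_\mu\Rm_{aij\mu}\omega_\mu$, gives directly
\[
 \sum_p\nabla_p\nabla_j\Ric_{pk}
 =\tfrac12\nabla_j\nabla_k\operatorname{Scal}_g+(\Ric_g^2)_{jk}-(\mathcal S_g)_{jk}.
\]
So your term $-\Rm^p{}_j{}^q{}_k\Ric_{pq}$ is right only if it is read as $-\langle\Rm_g(e_p,e_j)e_k,e_q\rangle\Ric_{pq}$. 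With the paper's literal index placement it would have the opposite sign.

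One caution about your sign checks. The round sphere cannot distinguish $2\mathcal S_g-2\Ric_g^2$ from $-2\mathcal S_g+2\Ric_g^2$. Neither can any Einstein product, where $\mathcal S_g=\Ric_g^2$, nor a trace check, since $\operatorname{tr}_g\mathcal S_g=\operatorname{tr}_g\Ric_g^2=|\Ric_g|^2$. Confirming the coefficient of $\Delta\Ric_g$ does not help either.

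The absolute sign comes from the $+\Ric_j{}^q\Ric_{qk}$ term in your commutator. This is the convention-independent contracted Ricci identity $\nabla^p\nabla_jW_p=\nabla_j\nabla^pW_p+\Ric_{jq}W^q$, which the paper uses as $\Delta\nabla h=\nabla\Delta h+\Ric_g^\#(\nabla h)$. You wrote that term correctly. Once it is fixed, the sphere check does determine the sign of the $\mathcal S_g$ term, but that sign ultimately rests on the derivation rather than on the tests.
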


\begin{proof}
This follows from \cite[Proposition~2.5.3]{topping2006lectures} after
converting its curvature convention to ours and using the definitions of
$\mathcal S_g$ and $\Ric_g^2$.
\end{proof}

\begin{proof}[Proof of \eqref{eq:main-corrected}]
Short-time existence and smoothness of Ricci flow on the closed manifold
$M$ ensure that there is $T>0$ and $C < \infty$ such that
\[
 \sup_{0\le s\le T}
 \left(\|\partial_s\Ric_{g_s}\|_{L^\infty(g)}
 +\|\partial_s^2\Ric_{g_s}\|_{L^\infty(g)}\right)
 \le C.
\]
Taylor's formula applied to
$\partial_tg_t=-2\Ric_{g_t}$ therefore yields
\[
 g_t
 =g-2t\Ric_g
 -t^2\left.\partial_t\Ric_{g_t}\right|_{t=0}
 +O(t^3)
\]
with a remainder bounded in $C^0(M,g)$ by $Ct^3$.  Subtracting this
expansion from \eqref{eq:main-GM} and using
Lemma~\ref{lem:ricci-evolution}, we find the coefficient of $t^2$ to be
\[
 A_g+\left.\partial_t\Ric_{g_t}\right|_{t=0}
 =\left(-\Delta\Ric_g+2\Ric_g^2-\frac23\mathcal Q_g\right)
 +\left(\Delta\Ric_g+2\mathcal S_g-2\Ric_g^2\right)
 =2\mathcal S_g-\frac23\mathcal Q_g,
\]
which is $D_g$ by \eqref{eq:A-D-main}.  This proves
\eqref{eq:main-corrected}.
\end{proof}

\begin{proof}[Proof of \eqref{eq:main-GH}]
For every $x\in M$ and $v\in T_xM$, equation~\eqref{eq:main-corrected}
and the boundedness of $D_g$ imply
\[
 \left|\widetilde g_t(v,v)-g_t(v,v)\right|
 \le Ct^2|v|_g^2.
\]
Both metrics are uniformly equivalent to $g$ for $0<t\le T$.  Comparing
the length of every smooth curve therefore shows that
\[
 \sup_{x,y\in M}
 |d_{\widetilde g_t}(x,y)-d_{g_t}(x,y)|
 \le C\diam(M,g)t^2.
\]
The identity correspondence has distortion at most this supremum,
and the fixed diameter can be absorbed into $C$.  This proves
\eqref{eq:main-GH}.
\end{proof}

\section{Geometric consequences of the algebraic discrepancy}
\label{sec:geometric-consequences}

We derive several consequences of the discrepancy tensor $D_g$.
Its trace determines the volume defect of $\widetilde g_t$ relative to
$g_t$.  For Einstein four-manifolds, the tensor $D_g$ is pointwise
proportional to $g$.  At a Ricci-flat metric the kernel of $D_g$ is the
nullity space of the curvature tensor.  In dimensions two and three, its
vanishing forces flatness.  Round spheres show that the $O(t^2)$ Gromov--Hausdorff
comparison is sharp.  We begin with the trace and the resulting volume
expansion.

\subsection{The trace and the volume defect}

Contracting \eqref{eq:S-main} and \eqref{eq:Q-main} yields
$\operatorname{tr}_g\mathcal S_g=|\Ric_g|^2$ and
$\operatorname{tr}_g\mathcal Q_g=|\Rm_g|^2$.  Hence
\eqref{eq:A-D-main} gives
\begin{equation}\label{eq:trace-D}
 \operatorname{tr}_gD_g
 =2|\Ric_g|^2-\frac23|\Rm_g|^2.
\end{equation}
As $t\downarrow0$, applying
$\det(I+h)^{1/2}=1+\frac12\operatorname{tr}h+O(|h|^2)$ to
\eqref{eq:main-corrected} gives
\[
 \frac{d\operatorname{vol}_{\widetilde g_t}}
 {d\operatorname{vol}_{g_t}}
 =1+\frac{t^2}{2}\operatorname{tr}_gD_g+O(t^3)
\]
uniformly on $M$.  Equation~\eqref{eq:trace-D} therefore gives
\begin{equation}\label{eq:relative-total-volume}
 \operatorname{Vol}(M,\widetilde g_t)
 -\operatorname{Vol}(M,g_t)
 =t^2\int_M\left(|\Ric_g|^2-\frac13|\Rm_g|^2\right)
 d\operatorname{vol}_g+O(t^3).
\end{equation}

\subsection{Einstein metrics}

Suppose that $g$ is Einstein with $\Ric_g=\lambda g$.  Ricci flow is then
the exact homothety $g_t=(1-2\lambda t)g$, so it has no term of order
$t^2$.
Thus $A_g=D_g$ is the coefficient of $t^2$ in $\widetilde g_t$ and describes
its entire second-order departure from this homothety.  This coefficient
involves a quadratic contraction of the Weyl tensor.

\begin{proposition}[Weyl decomposition of $D_g$]
\label{prop:Einstein-initial-metric}
Let $(M^n,g)$ be a closed connected Einstein manifold of dimension
$n\ge3$ with $\Ric_g=\lambda g$.  Let $\mathcal Q_g$ be the tensor defined
in \eqref{eq:Q-main}.  Let $W_g$ be the Weyl tensor.  For $x\in M$ and
$X,Y\in T_xM$, set
\[
 \mathcal Q_{W,g}(X,Y)
 =\sum_{a,b,c=1}^n
 \langle W_g(e_a,e_b)e_c,X\rangle
 \langle W_g(e_a,e_b)e_c,Y\rangle,
\]
where $(e_1,\ldots,e_n)$ is an orthonormal basis of $T_xM$.  Then
\begin{equation}\label{eq:Einstein-D}
 \mathcal Q_g=\mathcal Q_{W,g}+\frac{2\lambda^2}{n-1}g,
 \qquad
 A_g=D_g=\frac{2\lambda^2(3n-5)}{3(n-1)}g-\frac23\mathcal Q_{W,g}.
\end{equation}
Combined with \eqref{eq:main-GM}, this yields
\begin{equation}\label{eq:Einstein-GM-expansion}
 \widetilde g_t
 =(1-2\lambda t)g
 +t^2\left(
 \frac{2\lambda^2(3n-5)}{3(n-1)}g
 -\frac23\mathcal Q_{W,g}\right)+O(t^3).
\end{equation}
Moreover, the discrepancy $D_g$ vanishes if and only if
$\mathcal Q_{W,g}=\frac{(3n-5)\lambda^2}{n-1}g$.
\end{proposition}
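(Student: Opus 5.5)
We will use the Ricci decomposition of the curvature tensor of an Einstein metric and then substitute it into the definitions \eqref{eq:Q-main}, \eqref{eq:S-main} and \eqref{eq:A-D-main}. Since $\Ric_g=\lambda g$, the traceless Ricci part vanishes, and with our conventions
\[
 \Rm_g=W_g+\frac{\lambda}{n-1}\,R_1,\qquad
 \langle R_1(X,Y)Z,U\rangle=\langle X,U\rangle\langle Y,Z\rangle-\langle X,Z\rangle\langle Y,U\rangle .
\]
We will fix the sign of $R_1$ by checking that $\sum_a\langle R_1(e_a,X)Y,e_a\rangle=(n-1)\langle X,Y\rangle$, so that the trace reproduces $\Ric_g=\lambda g$ in our convention $\Ric_g(X,Y)=\sum_a\langle\Rm_g(e_a,X)Y,e_a\rangle$.

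The first step is to expand $\mathcal Q_g(X,Y)$ as a sum of three pieces: the $W$--$W$ term, two cross terms, and the $R_1$--$R_1$ term. The $W$--$W$ term is by definition $\mathcal Q_{W,g}(X,Y)$.

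For the cross terms, we compute
\[
 \sum_{a,b,c}\langle W(e_a,e_b)e_c,X\rangle\langle R_1(e_a,e_b)e_c,Y\rangle .
\]
Writing out $\langle R_1(e_a,e_b)e_c,Y\rangle=\langle e_a,Y\rangle\delta_{bc}-\langle e_a,e_c\rangle\langle e_b,Y\rangle$ turns this into contractions of $W$ over a pair of indices. Each such contraction is a Ricci-type trace of $W$, so it vanishes. The other cross term vanishes by the same argument.

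For the $R_1$--$R_1$ term, a direct count gives
\[
 \sum_{a,b,c}\langle R_1(e_a,e_b)e_c,X\rangle\langle R_1(e_a,e_b)e_c,Y\rangle=2(n-1)\langle X,Y\rangle .
\]
Indeed, expanding the square produces two diagonal terms, each equal to $n\langle X,Y\rangle$, and two cross terms, each equal to $-\langle X,Y\rangle$. Multiplying by $\lambda^2/(n-1)^2$ gives $\frac{2\lambda^2}{n-1}g$, which is the first identity in \eqref{eq:Einstein-D}.

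Next we compute $A_g$. Since $\nabla\Ric_g=0$, we have $\Delta\Ric_g=0$, and $\Ric_g^2=\lambda^2g$. Also
\[
 \mathcal S_g(X,Y)=\lambda\sum_a\langle\Rm_g(e_a,X)Y,e_a\rangle=\lambda\Ric_g(X,Y)=\lambda^2 g(X,Y),
\]
so $2\mathcal S_g=2\Ric_g^2$. Consequently $A_g=D_g=2\lambda^2g-\frac23\mathcal Q_g$. Substituting the first identity gives
\[
 A_g=D_g=\Bigl(2-\frac{4}{3(n-1)}\Bigr)\lambda^2g-\frac23\mathcal Q_{W,g}
 =\frac{2\lambda^2(3n-5)}{3(n-1)}g-\frac23\mathcal Q_{W,g}.
\]

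The expansion \eqref{eq:Einstein-GM-expansion} then follows by inserting this into \eqref{eq:main-GM}, using $-2t\Ric_g=-2\lambda tg$.

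For the final claim, $D_g=0$ holds exactly when $\frac23\mathcal Q_{W,g}=\frac{2\lambda^2(3n-5)}{3(n-1)}g$. Multiplying by $\frac32$ gives the stated condition $\mathcal Q_{W,g}=\frac{(3n-5)\lambda^2}{n-1}g$.

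The only genuine obstacle is sign and convention bookkeeping. This concerns the sign of $R_1$ under our curvature convention, and the index slots in which the tracelessness of $W$ is invoked when killing the cross terms. We will settle both by tracing against the definition of $\Ric_g$. As a consistency check, the result should give $\operatorname{tr}_g\mathcal Q_g=|\Rm_g|^2=|W_g|^2+\frac{2n\lambda^2}{n-1}$, the standard norm identity for Einstein metrics.
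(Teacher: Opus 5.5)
Your proposal is correct and follows the paper's proof essentially step by step. It uses the same decomposition $\Rm_g(e_a,e_b)e_c=W_g(e_a,e_b)e_c+\frac{\lambda}{n-1}(\delta_{bc}e_a-\delta_{ac}e_b)$, kills the cross terms by the tracelessness of $W_g$, counts $2(n-1)$ for the constant-curvature part, and uses $\mathcal S_g=\lambda^2 g$ to get $A_g=D_g=2\lambda^2 g-\frac23\mathcal Q_g$. The one point the paper makes explicit and you take for granted is the justification of $\nabla\Ric_g=0$: if $\lambda$ is only assumed to be a function, the contracted second Bianchi identity together with $n\ge3$ forces it to be constant.
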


\begin{proof}
Since $n\ge3$, the contracted second Bianchi identity implies that
$\lambda$ is constant.  Hence $\nabla\Ric_g=0$ and $\Delta\Ric_g=0$. Substituting
$\Ric_g=\lambda g$ into \eqref{eq:S-main} yields
\[
\mathcal S_g(X,Y)
=\lambda\sum_a\langle\Rm_g(e_a,X)Y,e_a\rangle
=\lambda\Ric_g(X,Y)
=\lambda^2g(X,Y).
\]
The definitions in \eqref{eq:A-D-main} therefore imply
\begin{equation}\label{eq:Einstein-A-D-intermediate}
 A_g=D_g=2\lambda^2g-\frac23\mathcal Q_g.
\end{equation}
The definition of the Weyl tensor gives
\[
 \Rm_g(e_a,e_b)e_c
 =W_g(e_a,e_b)e_c
 +\frac{\lambda}{n-1}(\delta_{bc}e_a-\delta_{ac}e_b).
\]
Since $W_g$ is trace-free, we have
\[
 \sum_{a,b,c}
 \langle W_g(e_a,e_b)e_c,X\rangle
 \langle \delta_{bc}e_a-\delta_{ac}e_b,Y\rangle=0.
\]
The same equality holds with $X$ and $Y$ interchanged.  Direct summation
also gives
\[
 \sum_{a,b,c}
 \left\langle\frac{\lambda}{n-1}
 (\delta_{bc}e_a-\delta_{ac}e_b),X\right\rangle
 \left\langle\frac{\lambda}{n-1}
 (\delta_{bc}e_a-\delta_{ac}e_b),Y\right\rangle
 =\frac{2\lambda^2}{n-1}g(X,Y).
\]
Hence
$\mathcal Q_g=\mathcal Q_{W,g}+\frac{2\lambda^2}{n-1}g$.  Substitution in
\eqref{eq:Einstein-A-D-intermediate} proves \eqref{eq:Einstein-D}.
Equation~\eqref{eq:Einstein-GM-expansion} follows from
\eqref{eq:main-GM}.  The stated characterization of $D_g=0$ follows
directly from \eqref{eq:Einstein-D}.
\end{proof}

\begin{remark}\label{rem:einstein-anisotropy}
Although Ricci flow starting from an Einstein metric is homothetic, the
second-order discrepancy $D_g$ need not be a multiple of $g$.  If
$W_g=0$, then $g$ has constant sectional curvature and
\eqref{eq:Einstein-D} gives
$D_g=\frac{2\lambda^2(3n-5)}{3(n-1)}g$.  The tensor $\mathcal Q_{W,g}$
can nevertheless make $D_g$ depend on the tangent direction, as the
following example shows.

For $\lambda>0$, equip $S^2$ and $S^3$ with round metrics of sectional
curvature $\lambda$ and $\lambda/2$, respectively.  Their product metric
$g$ on $S^2\times S^3$ is Einstein with $\Ric_g=\lambda g$.  An
$m$-dimensional space form $(N,h)$ of sectional curvature $K$ satisfies
$\mathcal Q_h=2(m-1)K^2h$. It follows that
$\mathcal Q_g$ has eigenvalues $2\lambda^2$ and $\lambda^2$ on vectors
tangent to $S^2$ and $S^3$, respectively, and $D_g$ has eigenvalues
$\frac23\lambda^2$ and $\frac43\lambda^2$ on these two subspaces and is
therefore positive definite but not a multiple of $g$.
\end{remark}

In dimension four, this anisotropy disappears because
$\mathcal Q_{W,g}$ is pointwise proportional to $g$.

\begin{corollary}[Einstein four-manifolds]
\label{cor:Einstein-four-manifold}
Let $(M^4,g)$ be a closed connected Einstein manifold with
$\Ric_g=\lambda g$.  Then
\begin{equation}\label{eq:Einstein-four-identities}
 \mathcal Q_g=\frac14|\Rm_g|^2g,
 \qquad
 A_g=D_g=\left(2\lambda^2-\frac16|\Rm_g|^2\right)g.
\end{equation}
The GM metric therefore has the expansion
\begin{equation}\label{eq:Einstein-four-GM}
 \widetilde g_t
 =(1-2\lambda t)g
 +t^2\left(2\lambda^2-\frac16|\Rm_g|^2\right)g
 +O(t^3).
\end{equation}
Thus the second-order discrepancy is pointwise conformal.
\end{corollary}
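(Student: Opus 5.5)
The plan is to reduce everything to the algebraic identity $\mathcal Q_g=\frac14|\Rm_g|^2g$ in dimension four. The formulas for $A_g=D_g$ and the expansion then follow by substitution into Proposition~\ref{prop:Einstein-initial-metric}.

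By Proposition~\ref{prop:Einstein-initial-metric} with $n=4$,
\[
 \mathcal Q_g=\mathcal Q_{W,g}+\frac{2\lambda^2}{3}g,
\]
so it suffices to show that
\[
 \mathcal Q_{W,g}=\frac14|W_g|^2g
\]
holds pointwise for any algebraic Weyl tensor in dimension four. This is the classical identity
\[
 W_{abci}W^{abc}{}_j=\frac14|W|^2\delta_{ij},
\]
valid in dimension four. To prove it, I would use the standard fact that in four dimensions every totally antisymmetrized expression over five indices vanishes. Concretely, one contracts
\[
 \delta^{[i}_{j}W^{ab}{}_{ab}W^{cd]}{}_{cd}=0,
\]
that is, the generalized Kronecker delta $\delta^{iabcd}_{jefgh}$, which is zero when $n=4$, contracted against $W^{ef}{}_{ab}W^{gh}{}_{cd}$. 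Expanding the antisymmetrization yields a combination of $|W|^2\delta^i_j$, $W^{iabc}W_{jabc}$, and terms with a trace of $W$. The trace terms vanish because $W$ is trace-free. Using the first Bianchi identity to identify the different index placements of $W^{i\cdot\cdot\cdot}W_{j\cdot\cdot\cdot}$ gives
\[
 4W^{iabc}W_{jabc}=|W|^2\delta^i_j .
\]
An alternative route for the same step uses the decomposition $W=W^+\oplus W^-$ on $\Lambda^2_\pm$ together with the quaternionic structure: each $W^\pm$ is a trace-free symmetric endomorphism of the three-dimensional space $\Lambda^2_\pm$, and the contraction $\sum\langle W(e_a,e_b)e_c,X\rangle\langle W(e_a,e_b)e_c,Y\rangle$ is invariant under $SO(4)$ acting on $X,Y$ in a way that forces proportionality to $g$.

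Next I would compute $|\Rm_g|^2=|W_g|^2+\frac{2\lambda^2\cdot 4}{3}$. This follows either from tracing the decomposition already recorded in Proposition~\ref{prop:Einstein-initial-metric} ($\operatorname{tr}\mathcal Q_g=|\Rm_g|^2$, $\operatorname{tr}\mathcal Q_{W,g}=|W_g|^2$, $\operatorname{tr} g=4$), or directly from the orthogonality of $W$ and the constant-curvature part. Substituting $|W_g|^2=|\Rm_g|^2-\frac{8\lambda^2}{3}$ gives
\[
 \mathcal Q_g=\frac14|\Rm_g|^2g-\frac{2\lambda^2}{3}g+\frac{2\lambda^2}{3}g=\frac14|\Rm_g|^2g .
\]

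Finally, \eqref{eq:Einstein-A-D-intermediate} gives
\[
 A_g=D_g=2\lambda^2g-\frac23\mathcal Q_g=\Bigl(2\lambda^2-\frac16|\Rm_g|^2\Bigr)g,
\]
and \eqref{eq:Einstein-four-GM} follows from \eqref{eq:Einstein-GM-expansion}, or equivalently \eqref{eq:main-GM} with $\Delta\Ric_g=0$.

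The only real obstacle is the four-dimensional Weyl identity. I expect the generalized-Kronecker-delta argument to work cleanly; the main care needed is in tracking signs and the Bianchi identity when matching the index placements.
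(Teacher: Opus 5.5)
Your proposal is correct and follows the paper's proof step for step. Both arguments use the $n=4$ case of \eqref{eq:Einstein-D}, the four-dimensional identity $\mathcal Q_{W,g}=\frac14|W_g|^2g$, the traced relation $|\Rm_g|^2=|W_g|^2+\frac83\lambda^2$, and then substitute into \eqref{eq:Einstein-A-D-intermediate} and \eqref{eq:main-GM}.

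The only difference is that the paper cites the Weyl identity (Lemma~3.6 of Catino et al.) rather than proving it. Your contraction of the vanishing five-index generalized Kronecker delta against two copies of $W_g$ does prove it: it yields $|W_g|^2\delta_{ij}=4\,\mathcal Q_{W,g}(e_i,e_j)$. That computation uses only trace-freeness, antisymmetry and pair symmetry of $W_g$, so the Bianchi step you anticipate is unnecessary. By contrast, your $SO(4)$ alternative is not yet an argument as phrased, because for a fixed $W_g$ the tensor $\mathcal Q_{W,g}$ is not $SO(4)$-invariant. That route would need a genuine equivariance argument over $\Lambda^2_\pm$ to go through.
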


\begin{proof}
By \cite[Lemma~3.6]{catino2020bochner}, every Weyl tensor in dimension
four satisfies $\mathcal Q_{W,g}=\frac14|W_g|^2g$.  For $n=4$ the first
identity in \eqref{eq:Einstein-D} reads
$\mathcal Q_g=\mathcal Q_{W,g}+\frac23\lambda^2g$.  Taking traces and using
$\operatorname{tr}_g\mathcal Q_g=|\Rm_g|^2$ and
$\operatorname{tr}_g\mathcal Q_{W,g}=|W_g|^2$ gives
$|\Rm_g|^2=|W_g|^2+\frac83\lambda^2$.  Hence
\[
 \mathcal Q_g
 =\frac14|W_g|^2g+\frac23\lambda^2g
 =\frac14|\Rm_g|^2g,
\]
which is the first identity in \eqref{eq:Einstein-four-identities}.
Substituting it in \eqref{eq:Einstein-A-D-intermediate} proves the
second.  Equation~\eqref{eq:Einstein-four-GM} follows from
\eqref{eq:main-GM}.
\end{proof}

\subsection{Ricci-flat metrics}

Ricci flow is stationary at every Ricci-flat metric.  By contrast, the
quadratic coefficient in the expansion of $\widetilde g_t$ vanishes
identically only when $g$ is flat.

\begin{corollary}[Ricci-flat rigidity]
\label{cor:Ricci-flat-rigidity}
Let $(M,g)$ be a closed connected Ricci-flat manifold.  Then
\begin{equation}\label{eq:Ricci-flat-GM}
 \widetilde g_t
 =g-\frac23t^2\mathcal Q_g+O(t^3).
\end{equation}
The corresponding volume expansion is
\begin{equation}\label{eq:Ricci-flat-volume}
 \operatorname{Vol}(M,\widetilde g_t)
 =\operatorname{Vol}(M,g)
 -\frac{t^2}{3}\int_M|\Rm_g|^2\,d\operatorname{vol}_g+O(t^3).
\end{equation}
At every $x\in M$, the kernel of $D_g$ is the nullity space of the
curvature tensor
\begin{equation}\label{eq:Ricci-flat-nullity}
 \ker D_g(x)
 =\{v\in T_xM:\Rm_g(v,w)=0\text{ for every }w\in T_xM\}.
\end{equation}
In particular, the quadratic coefficient in \eqref{eq:Ricci-flat-GM}
vanishes identically if and only if $g$ is flat.
\end{corollary}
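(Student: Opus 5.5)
Since $\Ric_g=0$, Ricci flow is stationary, so $g_t=g$ for all $t$. Also $\mathcal S_g=0$, because $\mathcal S_g$ is linear in $\Ric_g$ by \eqref{eq:S-main}. Hence $D_g=-\frac23\mathcal Q_g$. We also have $A_g=D_g$, since $\Delta\Ric_g=0$ and $\Ric_g^2=0$. Inserting this into \eqref{eq:main-GM}, or equivalently into \eqref{eq:main-corrected} with $g_t=g$, gives \eqref{eq:Ricci-flat-GM} with a $C^0$ remainder uniform in $x$ and in unit $v$.

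For the volume, apply the relative volume expansion \eqref{eq:relative-total-volume} with $\operatorname{Vol}(M,g_t)=\operatorname{Vol}(M,g)$. Since $|\Ric_g|^2=0$, the integrand reduces to $-\frac13|\Rm_g|^2$, which is \eqref{eq:Ricci-flat-volume}. Alternatively, compute directly: $\det(I-\frac23t^2\mathcal Q_g^\#)^{1/2}=1-\frac13t^2\operatorname{tr}_g\mathcal Q_g+O(t^3)$, and use $\operatorname{tr}_g\mathcal Q_g=|\Rm_g|^2$.

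For the kernel, $D_g=-\frac23\mathcal Q_g$, and $\mathcal Q_g$ is positive semidefinite because
\[
 \mathcal Q_g(v,v)=\sum_{a,b,c}\langle\Rm_g(e_a,e_b)e_c,v\rangle^2 .
\]
So $v\in\ker D_g(x)$ if and only if $\mathcal Q_g(v,v)=0$. This holds if and only if $\Rm_{abcv}=0$ for all $a,b,c$, where the index $v$ denotes contraction of the fourth slot with $v$. By the pair symmetry $\Rm_{abcd}=\Rm_{cdab}$, this is equivalent to $\Rm_{cvab}=0$ for all $a,b,c$, that is, $\langle\Rm_g(w,v)e_a,e_b\rangle=0$ for all $w,a,b$. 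By antisymmetry in the first pair, that is exactly $\Rm_g(v,w)=0$ for every $w$, which gives \eqref{eq:Ricci-flat-nullity}.

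It follows that the quadratic coefficient vanishes at $x$ exactly when $\mathcal Q_g(x)=0$, i.e.\ when $\operatorname{tr}\mathcal Q_g=|\Rm_g|^2(x)=0$. Hence it vanishes identically if and only if $\Rm_g\equiv0$, i.e.\ $g$ is flat. No step here is a real obstacle; the only point needing care is the index bookkeeping with the paper's curvature convention in the nullity characterization, which the pair and antisymmetry identities handle.
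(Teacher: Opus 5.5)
Your proposal is correct and follows the paper's proof essentially step for step: $\mathcal S_g=0$ gives $D_g=-\frac23\mathcal Q_g$, you insert $g_t=g$ into \eqref{eq:main-corrected}, specialize \eqref{eq:relative-total-volume} for the volume, and read off the kernel from the sum-of-squares form of $\mathcal Q_g$. Your pair-symmetry bookkeeping for the nullity space and your trace argument via $\operatorname{tr}_g\mathcal Q_g=|\Rm_g|^2$ for the final assertion only spell out what the paper compresses into ``the symmetry of $\Rm_g$''.
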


\begin{proof}
Since $\Ric_g=0$, we have $\mathcal S_g=0$ and hence
$D_g=-\frac23\mathcal Q_g$ by \eqref{eq:A-D-main}.  Using $g_t=g$ in
\eqref{eq:main-corrected}, we obtain \eqref{eq:Ricci-flat-GM}.
Equation~\eqref{eq:relative-total-volume} then gives
\eqref{eq:Ricci-flat-volume}.
By \eqref{eq:Q-main}, the tensor $\mathcal Q_g$ is positive semidefinite
and satisfies
\[
 \mathcal Q_g(v,v)
 =\sum_{a,b,c}\langle\Rm_g(e_a,e_b)e_c,v\rangle^2.
\]
Thus the symmetry of $\Rm_g$ implies that $\mathcal Q_g(v,v)=0$ if and only if
$\Rm_g(v,w)=0$ for every $w\in T_xM$.  This proves
\eqref{eq:Ricci-flat-nullity} and the final assertion.
\end{proof}

\begin{corollary}[Ricci-flat four-manifolds]
\label{cor:Ricci-flat-four-manifold}
Let $(M^4,g)$ be a closed connected Ricci-flat manifold.  Then
\begin{equation}\label{eq:Ricci-flat-four-GM}
 \widetilde g_t
 =\left(1-\frac16|\Rm_g|^2t^2\right)g+O(t^3).
\end{equation}
If $M$ is oriented, then
\[
 \operatorname{Vol}(M,\widetilde g_t)
 =\operatorname{Vol}(M,g)
 -\frac{32\pi^2}{3}\chi(M)t^2+O(t^3).
\]
\end{corollary}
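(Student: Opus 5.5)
The first claim \eqref{eq:Ricci-flat-four-GM} is a direct combination of two earlier results. Since $g$ is Ricci-flat, it is Einstein with $\lambda=0$. Corollary~\ref{cor:Einstein-four-manifold} then gives $\mathcal Q_g=\frac14|\Rm_g|^2g$. Substituting this into \eqref{eq:Ricci-flat-GM} of Corollary~\ref{cor:Ricci-flat-rigidity} gives
\[
 \widetilde g_t=g-\frac23\cdot\frac14|\Rm_g|^2t^2g+O(t^3)
 =\Bigl(1-\frac16|\Rm_g|^2t^2\Bigr)g+O(t^3).
\]
The remainder is uniform in $C^0$, as in Theorem~\ref{thm:main}. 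Equivalently, one can read this off from \eqref{eq:Einstein-four-GM} with $\lambda=0$.

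For the volume, we use \eqref{eq:Ricci-flat-volume}. It reduces the claim to the identity
\[
 \int_M|\Rm_g|^2\,d\operatorname{vol}_g=32\pi^2\chi(M)
\]
for closed oriented Ricci-flat four-manifolds. This follows from the Chern--Gauss--Bonnet formula in dimension four:
\[
 8\pi^2\chi(M)=\int_M\Bigl(\frac14|W_g|^2-\frac12|\mathring{\Ric}_g|^2+\frac1{24}\operatorname{Scal}_g^2\Bigr)d\operatorname{vol}_g,
\]
with $|W_g|^2$ taken in the full-tensor norm $\sum W_{abcd}^2$ used in this paper. When $\Ric_g=0$, the Weyl tensor equals $\Rm_g$. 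Indeed, the identity $|\Rm_g|^2=|W_g|^2+\frac83\lambda^2$ from the proof of Corollary~\ref{cor:Einstein-four-manifold} gives $|W_g|^2=|\Rm_g|^2$ at $\lambda=0$. The formula therefore reduces to $8\pi^2\chi(M)=\frac14\int_M|\Rm_g|^2$, that is, $\int_M|\Rm_g|^2=32\pi^2\chi(M)$. Inserting this into \eqref{eq:Ricci-flat-volume} gives the coefficient $-\frac13\cdot32\pi^2\chi(M)=-\frac{32\pi^2}{3}\chi(M)$. As a cross-check, integrating the trace of \eqref{eq:Ricci-flat-four-GM} via \eqref{eq:relative-total-volume} gives the same coefficient $-\frac13\int|\Rm_g|^2$.

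The main obstacle is getting the normalization in Gauss--Bonnet right for the paper's conventions, where $|\Rm_g|^2=\sum\Rm_{abcd}^2$ counts each independent component four times relative to the $\frac14$ convention. I would check the constant on a model case. On the round $S^4$ of curvature $1$ we have $|\Rm|^2=2n(n-1)=24$, $\operatorname{Scal}=12$, and $\operatorname{Vol}(S^4)=\frac{8\pi^2}{3}$. The scalar term then gives $\frac{144}{24}\cdot\frac{8\pi^2}{3}=16\pi^2=8\pi^2\chi(S^4)$, which confirms the constant $8\pi^2$ with the $\frac1{24}\operatorname{Scal}^2$ term. Checking the Weyl coefficient $\frac14$ needs a Weyl-nontrivial model, for instance the flat-torus or K3 normalization $\int|\Rm|^2=32\pi^2\chi$ found in the literature. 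Orientability is only used to quote Chern--Gauss--Bonnet in its usual form. The formula also holds in the nonorientable case via the orientable double cover, but I would keep the hypothesis as stated.
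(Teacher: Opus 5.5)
Your proposal is correct and follows the paper's proof. The metric expansion is \eqref{eq:Einstein-four-GM} at $\lambda=0$, or equivalently $\mathcal Q_g=\frac14|\Rm_g|^2g$ substituted into \eqref{eq:Ricci-flat-GM}; the volume formula is \eqref{eq:Ricci-flat-volume} combined with the Chern--Gauss--Bonnet identity $\int_M|\Rm_g|^2\,d\operatorname{vol}_g=32\pi^2\chi(M)$, which the paper cites from Besse and your Weyl-form version with $|W_g|^2=|\Rm_g|^2$ reproduces correctly (one small aside: the flat torus cannot test the Weyl coefficient because both sides vanish there, whereas a nonflat example such as K3 or $S^2\times S^2$ can).
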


\begin{proof}
Equation~\eqref{eq:Ricci-flat-four-GM} is the case $\lambda=0$ of
\eqref{eq:Einstein-four-GM}.  For an oriented Ricci-flat four-manifold,
the Chern--Gauss--Bonnet formula
\cite[Equation~(6.31)]{besse1987einstein} gives
\[
 \int_M|\Rm_g|^2\,d\operatorname{vol}_g=32\pi^2\chi(M).
\]
The volume formula follows from \eqref{eq:Ricci-flat-volume}.
\end{proof}

\begin{remark}
The volume defect can have either sign.  It is negative for every nonflat
Ricci-flat four-manifold by Corollary~\ref{cor:Ricci-flat-four-manifold}.
On the unit round four-sphere it is positive because $\lambda=3$ and
$|\Rm_g|^2=24$ give $D_g=14g$ by
\eqref{eq:Einstein-four-identities}.
\end{remark}

\subsection{Dimensions two and three}

\begin{corollary}[Low-dimensional rigidity]
\label{cor:low-dimensional-rigidity}
Let $(M,g)$ be a closed connected Riemannian manifold of dimension two or
three.  In dimension two let $K$ denote the Gaussian curvature.  Then
\begin{equation}\label{eq:surface-D}
 D_g=\frac23K^2g
\end{equation}
at every point.  In dimension three
\begin{equation}\label{eq:three-dimensional-D}
 D_g
 =-\frac83\Ric_g^2+\frac53\operatorname{Scal}_g\Ric_g
 +\left(\frac23|\Ric_g|^2-\frac13\operatorname{Scal}_g^2\right)g.
\end{equation}
In either dimension, the tensor $D_g$ vanishes at a point if and only if
$\Rm_g$ vanishes at that point.  Consequently, the difference
$\widetilde g_t-g_t$ is $O(t^3)$ if and only if $g$ is flat.
\end{corollary}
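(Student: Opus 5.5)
The plan is to compute $D_g=2\mathcal S_g-\frac23\mathcal Q_g$ pointwise in an orthonormal eigenbasis, using that in dimensions two and three the full curvature tensor is determined algebraically by the Ricci tensor.

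\emph{Dimension two.} Here $\Rm_g$ has constant sectional curvature $K$ at each point, so $\Ric_g=Kg$. Substituting into \eqref{eq:S-main} gives
\[
 \mathcal S_g=K\Ric_g=K^2g .
\]
The space-form identity quoted in Remark~\ref{rem:einstein-anisotropy}, $\mathcal Q_h=2(m-1)K^2h$, is purely pointwise and algebraic, so it applies with $m=2$ and gives $\mathcal Q_g=2K^2g$. Hence
\[
 D_g=2K^2g-\tfrac43K^2g=\tfrac23K^2g ,
\]
which is \eqref{eq:surface-D}. This vanishes exactly where $K=0$, that is, exactly where $\Rm_g=0$.

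\emph{Dimension three.} Fix $x$ and an orthonormal basis $e_1,e_2,e_3$ diagonalizing $\Ric_g$, with eigenvalues $r_1,r_2,r_3$, and set $s=\operatorname{Scal}_g=r_1+r_2+r_3$. Since the Weyl tensor vanishes, the curvature operator is diagonal on the $e_i\wedge e_j$. The sectional curvatures are $K_{ij}=\frac s2-r_k$ for $\{i,j,k\}=\{1,2,3\}$. In this basis all components $\langle\Rm_g(e_a,e_b)e_c,e_d\rangle$ vanish unless $\{a,b\}=\{c,d\}$. It follows that $\mathcal S_g$ and $\mathcal Q_g$ are diagonal. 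Reading off from \eqref{eq:S-main} and \eqref{eq:Q-main},
\[
 \mathcal S_g(e_1,e_1)=r_2K_{12}+r_3K_{13},
 \qquad
 \mathcal Q_g(e_1,e_1)=2\bigl(K_{12}^2+K_{13}^2\bigr),
\]
with the cyclic analogues for $e_2,e_3$. Substituting $K_{ij}=\frac s2-r_k$ gives a quadratic polynomial in $(r_1,r_2,r_3)$. I will check that it equals
\[
 -\tfrac83r_1^2+\tfrac53sr_1+\tfrac23\bigl(r_1^2+r_2^2+r_3^2\bigr)-\tfrac13s^2 ,
\]
which is the $(1,1)$ entry of the right side of \eqref{eq:three-dimensional-D}. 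Both sides are diagonal in this basis, so this proves \eqref{eq:three-dimensional-D}. This coefficient matching is routine but is where an error is most likely, so I will verify it on the test cases $r=(1,1,1)$ and $r=(1,0,0)$ as well.

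\emph{Vanishing in dimension three.} This is the main step. Write $D_i=D_g(e_i,e_i)=-\frac83r_i^2+\frac53sr_i+c$, where $c$ does not depend on $i$. Then
\[
 D_i-D_j=(r_i-r_j)\Bigl(\tfrac53s-\tfrac83(r_i+r_j)\Bigr).
\]
Suppose $D_g(x)=0$ and split by the multiplicity of the eigenvalues.
\begin{itemize}
 \item If the $r_i$ are pairwise distinct, then $r_i+r_j=\frac58s$ for all three pairs. This forces $r_1=r_2=r_3$, a contradiction.
 \item If $r_1=r_2=r_3=r$, then $D_1=\frac43r^2$, so $r=0$.
 \item If $r_1=r_2\ne r_3$, then $5(2r_1+r_3)=8(r_1+r_3)$, so $r_1=\frac32r_3$. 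Substituting into $D_3=0$ gives a nonzero multiple of $r_3^2$, so $r_3=0$ and then $r_1=0$.
\end{itemize}
In every case $\Ric_g(x)=0$, hence $\Rm_g(x)=0$. The converse is immediate, since $D_g$ is quadratic in $\Rm_g$.

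\emph{The final statement.} If $g$ is flat, then $D_g=0$ and \eqref{eq:main-corrected} gives $\widetilde g_t-g_t=O(t^3)$. Conversely, if $\widetilde g_t-g_t=O_{C^0}(t^3)$, then \eqref{eq:main-corrected} gives $t^2D_g=O(t^3)$. Dividing by $t^2$ and letting $t\downarrow0$ yields $D_g\equiv0$, so $\Rm_g\equiv0$ by the pointwise characterization above.
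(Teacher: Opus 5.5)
Your proposal is correct and follows essentially the same route as the paper: a pointwise computation of $\mathcal S_g$ and $\mathcal Q_g$ in a Ricci eigenbasis using the vanishing of the Weyl tensor, followed by the same case analysis on $D_i-D_j$. Your parametrization by the sectional curvatures $K_{ij}=\tfrac s2-r_k$ only repackages the paper's decomposition formula, and the resulting diagonal entries do agree with \eqref{eq:three-dimensional-D}; your $r_1=\tfrac32r_3$ with $D_3=\tfrac73r_3^2$ is exactly the paper's $b=2a/3$ with $D_1=\tfrac{28}{27}a^2$.
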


\begin{proof}
In dimension two $\Ric_g=Kg$ and
\[
 \mathcal S_g=K^2g,
 \qquad
 \mathcal Q_g=2K^2g.
\]
This proves \eqref{eq:surface-D}.  Since $K$ determines $\Rm_g$ in
dimension two, it also proves the pointwise rigidity statement there.

In dimension three the Weyl tensor vanishes, so
\[
\begin{aligned}
 \Rm_g(X,Y)Z
 ={}&\Ric_g(Y,Z)X-\Ric_g(X,Z)Y
 +\langle Y,Z\rangle\Ric_g^\#X-\langle X,Z\rangle\Ric_g^\#Y\\
 &-\frac{\operatorname{Scal}_g}{2}
 \bigl(\langle Y,Z\rangle X-\langle X,Z\rangle Y\bigr).
\end{aligned}
\]
Fix $x\in M$ and choose an orthonormal basis $(e_1,e_2,e_3)$ of $T_xM$
that diagonalizes $\Ric_g$.  Denote the eigenvalues by
$\lambda_1,\lambda_2,\lambda_3$.
Substituting the preceding decomposition into \eqref{eq:S-main} and
\eqref{eq:Q-main} and contracting in this basis yields
\[
\begin{aligned}
 \mathcal S_g
 &=-2\Ric_g^2+\frac32\operatorname{Scal}_g\Ric_g
 +\left(|\Ric_g|^2-\frac12\operatorname{Scal}_g^2\right)g,\\
 \mathcal Q_g
 &=-2\Ric_g^2+2\operatorname{Scal}_g\Ric_g
 +\left(2|\Ric_g|^2-\operatorname{Scal}_g^2\right)g.
\end{aligned}
\]
Together with \eqref{eq:A-D-main}, these identities prove
\eqref{eq:three-dimensional-D}.  Since \eqref{eq:three-dimensional-D}
expresses $D_g$ as a polynomial in $\Ric_g$ and $g$, the tensor $D_g$ is
diagonal in the chosen basis.  Writing $D_i=D_g(e_i,e_i)$, we obtain from
\eqref{eq:three-dimensional-D} that
\begin{equation}\label{eq:three-dimensional-D-differences}
 D_i-D_j
 =-\frac13(\lambda_i-\lambda_j)
 \bigl(3\lambda_i+3\lambda_j-5\lambda_k\bigr)
\end{equation}
whenever $\{i,j,k\}=\{1,2,3\}$.
Suppose that $D_g=0$.  If the three eigenvalues were distinct, applying
\eqref{eq:three-dimensional-D-differences} to the pairs $(1,2)$ and
$(1,3)$ would give $\lambda_2=\lambda_3$, a contradiction.  Hence at least two
eigenvalues agree.  If $\lambda_1=\lambda_2=a$ and $\lambda_3=b$ with $a\ne b$, then \eqref{eq:three-dimensional-D-differences} gives
$b=2a/3$, while $D_1=0$ gives $28a^2/27=0$, again a contradiction.
Thus all three eigenvalues are equal to some $a$.  Then
$D_g=\frac43a^2g$.  Since $D_g=0$, we have $a=0$.
Thus $D_g=0$ implies $\Ric_g=0$ and
$\Rm_g=0$ in dimension three.  If $\Rm_g=0$, the definitions of $\mathcal S_g$ and
$\mathcal Q_g$ give $D_g=0$.  Equation~\eqref{eq:main-corrected} now shows
that the difference $\widetilde g_t-g_t$ is $O(t^3)$ in the $C^0$ operator
norm induced by $g$ if and only if $D_g$ vanishes identically.
\end{proof}

\subsection{Sharpness on round spheres}

\begin{corollary}[Expansion on the unit round sphere]\label{cor:sphere}
For $n\ge2$, let $g_{S^n}$ be the unit round metric on $S^n$.  Then
\begin{equation}\label{eq:sphere-metric}
 \widetilde g_t
 =\left[1-2(n-1)t
 +\frac{2(n-1)(3n-5)}3t^2+O_n(t^3)\right]g_{S^n}.
\end{equation}
Here the constant implicit in $O_n(t^3)$ depends only on $n$.
The corresponding Gromov--Hausdorff expansion is
\begin{equation}\label{eq:sphere-GH}
 \dGH\bigl((S^n,d_{\widetilde g_t}),
 (S^n,d_{g_t})\bigr)
 =\frac{\pi(n-1)(3n-5)}6t^2+O_n(t^3).
\end{equation}
Thus the $O(t^2)$ estimate in \eqref{eq:main-GH} is sharp.
\end{corollary}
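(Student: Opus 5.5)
Both claims of Corollary~\ref{cor:sphere} follow from Theorem~\ref{thm:main} together with the homogeneity of the round sphere. The main point is to make the $O(t^3)$ constants depend only on $n$ and to get matching upper and lower bounds for the Gromov--Hausdorff distance.

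\textbf{Step 1: the metric expansion.} The unit sphere $g_{S^n}$ is Einstein with $\lambda=n-1$ and $W_g=0$. Proposition~\ref{prop:Einstein-initial-metric} then gives
\[
A_g=\frac{2(n-1)(3n-5)}{3}\,g_{S^n}.
\]
The isometry group $O(n+1)$ acts transitively on unit tangent vectors. The heat kernel, the velocity potentials and hence $\widetilde g_t$ are $O(n+1)$-invariant, so $\widetilde g_t=c(t)\,g_{S^n}$ for a scalar function $c(t)$. Inserting a single unit vector into \eqref{eq:main-GM} gives \eqref{eq:sphere-metric}. The constants $T$ and $C$ in Theorem~\ref{thm:main} depend only on $(M,g)$, and $(S^n,g_{S^n})$ is determined by $n$, so the remainder is $O_n(t^3)$.

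\textbf{Step 2: the Ricci flow side.} The Ricci flow is $g_t=(1-2(n-1)t)g_{S^n}$ exactly. Therefore
\[
\widetilde g_t=\bigl(1+\epsilon(t)\bigr)g_t,
\qquad
\epsilon(t)=\frac{2(n-1)(3n-5)}{3}t^2+O_n(t^3).
\]
Both metrics are constant multiples of $g_{S^n}$, so $d_{\widetilde g_t}=\alpha\,d_{g_t}$ with $\alpha=\sqrt{1+\epsilon}=1+\tfrac12\epsilon+O(t^4)$. The spaces in \eqref{eq:sphere-GH} are thus a round sphere $X=(S^n,d_{g_t})$ and its rescaling $\alpha X$, where $X$ has diameter $D_t=\pi\sqrt{1-2(n-1)t}=\pi+O(t)$.

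\textbf{Step 3: the Gromov--Hausdorff distance between $X$ and $\alpha X$.} For the upper bound, the identity correspondence has distortion $(\alpha-1)D_t$. This gives
\[
\dGH\le \tfrac12(\alpha-1)D_t=\frac{\pi(n-1)(3n-5)}{6}t^2+O_n(t^3).
\]
For the lower bound I would use $|\diam X-\diam Y|\le2\,\dGH(X,Y)$, which holds because any correspondence with distortion $\delta$ changes the diameter by at most $\delta$. This gives $\dGH\ge\tfrac12(\alpha-1)D_t$, the same quantity. Expanding yields $\tfrac12\cdot\tfrac12\epsilon\cdot\pi=\frac{\pi(n-1)(3n-5)}{6}t^2+O_n(t^3)$, which is \eqref{eq:sphere-GH}. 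The leading coefficient is nonzero for $n\ge2$, since $3n-5>0$ there, so \eqref{eq:main-GH} is sharp.

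The main obstacle is the isotropy argument in Step~1. It requires checking that the construction of $\widetilde g_t$ is natural under isometries, i.e.\ $p_t(\phi x,\phi y)=p_t(x,y)$, so that $\phi^*\widetilde g_t=\widetilde g_t$. Once that is in place, the rest is a direct computation.
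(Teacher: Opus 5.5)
Your argument is correct and follows the paper's route: you get $A_g$ from the Einstein/Weyl formula, use isotropy to make $\widetilde g_t$ a multiple of $g_{S^n}$, use that Ricci flow is an exact homothety, and rescale distances; your lower bound $|\diam X-\diam Y|\le 2\dGH(X,Y)$ also supplies the matching lower bound on $\dGH$ that the paper leaves implicit. One small fix: Proposition~\ref{prop:Einstein-initial-metric} is stated only for $n\ge3$, so for $n=2$ you should check $A_g=\frac23 g_{S^2}$ directly from $\Ric_g=g$, $\Delta\Ric_g=0$, $\Ric_g^2=g$ and $\mathcal Q_g=2g$, as the paper does.
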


\begin{proof}
For $n\ge3$, the unit round sphere is Einstein with $\lambda=n-1$ and has
vanishing Weyl tensor.  Thus \eqref{eq:Einstein-GM-expansion} implies
\eqref{eq:sphere-metric}.  In dimension two the unit round metric has $\Ric_g=g$, and the space-form
identity in Remark~\ref{rem:einstein-anisotropy} gives $\mathcal Q_g=2g$.
The identity $\Ric_g=g$ gives $\Delta\Ric_g=0$ and $\Ric_g^2=g$.  The definition~\eqref{eq:A-D-main}
therefore yields $A_g=\frac23g$, and \eqref{eq:main-GM} implies
\eqref{eq:sphere-metric}.  The isometry group preserves the heat kernel
and acts transitively on the unit tangent bundle.  Hence $\widetilde g_t$
is a scalar multiple of $g_{S^n}$ for every $t$, so both
$d_{\widetilde g_t}$ and $d_{g_t}$ are multiples of $d_{g_{S^n}}$.

Since Ricci flow is $g_t=(1-2(n-1)t)g_{S^n}$, taking square roots of the
two scalar factors yields
\[
 d_{\widetilde g_t}-d_{g_t}
 =\left[\frac{(n-1)(3n-5)}3t^2+O_n(t^3)\right]d_{g_{S^n}}.
\]
Since the round sphere has diameter $\pi$, this proves
\eqref{eq:sphere-GH}.
\end{proof}

\appendix
\section{Proof of the rescaled expansions}
\label{app:rescaled-expansions}

\begin{proof}[Proof of Lemma~\ref{lem:rescaled-expansions}]
In the normal coordinates used in Section~\ref{sec:first-correction},
define the volume density $\mathfrak j_x$ by
\[
 (\exp_x)^*d\operatorname{vol}_g=\mathfrak j_x(\xi)\,d\xi.
\]
When $d(x,y)<\inj(M,g)$, both $\exp_x^{-1}(y)$ and $d(x,y)^2$ are smooth
in $(x,y)$.  For $m\in\mathbb N_0$, the local heat kernel parametrix of
order $m$ is
\[
 p_t^{\mathbb R^n}(0,\xi)\sum_{k=0}^m t^ku_k(x,y),
 \qquad \xi=\exp_x^{-1}(y).
\]
The smooth coefficients $u_k$ are determined recursively
\cite[Section~11]{li2012geometric}.  Define
\[
 \alpha_t(x,y)=\frac{p_t(x,y)}{p_t^{\mathbb R^n}(0,\xi)}.
\]
Since $\sqrt tL_t\to0$, the strong heat kernel asymptotics
\cite[Theorem~1.1]{ludewig2019strong} give constants $T,C>0$ with the
  following property.  Suppose that $0<t\le T$ and $y=y_{t,x}(z)$ with
$|z|\le L_t$.  Then
\begin{equation}\label{eq:rescaled-two-term}
 |\alpha_t-u_0-tu_1|+|\nabla_x(\alpha_t-u_0-tu_1)|
 +|\nabla_y(\alpha_t-u_0-tu_1)|\le Ct^2.
\end{equation}
The derivatives in \eqref{eq:rescaled-two-term} are taken before the
substitution $y=y_{t,x}(z)$.

The first transport equation of the parametrix construction identifies
the leading coefficient with the inverse square root of the volume
density.  The next coefficient has the classical diagonal value shown
below.
\begin{equation}\label{eq:u0}
 u_0(x,\exp_x\xi)=\mathfrak j_x(\xi)^{-1/2},
 \qquad
 u_1(x,x)=\frac16\operatorname{Scal}_g(x).
\end{equation}
Both formulas are in
\cite[Chapter~VI, Section~3]{chavel1984eigenvalues}.  There the density is
written in geodesic polar coordinates as $\mathfrak j_x=\sqrt g/r^{n-1}$,
which is the ratio of the Riemannian volume element to the Euclidean one.
The uniform normal-coordinate bounds from
Section~\ref{sec:first-correction}, the smoothness of $u_1$, and
compactness imply the expansions
\begin{align*}
 \mathfrak j_x(\xi)^{1/2}
 &=1-\frac1{12}\Ric_g(\xi,\xi)+O(|\xi|^3),\\
 u_1(x,\exp_x\xi)&=\frac16\operatorname{Scal}_g(x)+O(|\xi|),\\
 g^{ij}(\xi)&=\delta^{ij}
 -\frac13\Rm^i{}_{k}{}^j{}_{\mu}(x)\xi^k\xi^\mu
 +O(|\xi|^3).
\end{align*}
The constants in these Taylor remainders and in their first
$\xi$-derivatives are uniform in $x\in M$.
Since $d\xi=t^{n/2}dz$ and
$p_1^{\mathbb R^n}(0,z)=t^{n/2}p_t^{\mathbb R^n}(0,\sqrt t\,z)$,
the definition of $\rho_{t,x}$ implies
\[
 \rho_{t,x}(z)
 =\frac{t^{n/2}p_t(x,y_{t,x}(z))\mathfrak j_x(\sqrt t\,z)}
 {p_1^{\mathbb R^n}(0,z)}
 =\mathfrak j_x(\sqrt t\,z)\alpha_t(x,y_{t,x}(z)).
\]
By \eqref{eq:rescaled-two-term},
we have $\alpha_t=u_0+tu_1+O(t^2)$ on the rescaled region.  Combining this with
\eqref{eq:u0}, we obtain
\begin{align*}
 \rho_{t,x}(z)
 &=\mathfrak j_x(\sqrt t\,z)^{1/2}
   +t\mathfrak j_x(\sqrt t\,z)u_1(x,\exp_x(\sqrt t\,z))
   +O(t^2)\\
 &=1+t\left(\frac16\operatorname{Scal}_g(x)
             -\frac1{12}\Ric_g(z,z)\right)
   +t^{3/2}r_t^\rho(z).
\end{align*}
The last equality defines $r_t^\rho$.  The preceding Taylor estimates
and \eqref{eq:rescaled-two-term} imply
\[
 |r_t^\rho|+|\partial r_t^\rho|\le C(1+|z|)^3.
\]
This proves the first line of \eqref{eq:rescaled-expansions}.  The
normal-coordinate expansion above gives
\[
 g^{ij}(\sqrt t\,z)
 =\delta^{ij}-\frac t3\Rm^i{}_{k}{}^j{}_{\mu}(x)z^kz^\mu
  +t^{3/2}r_t^{ij}(z),
\]
where
\[
 |r_t^{ij}|\le C|z|^3,
 \qquad |\partial r_t^{ij}|\le C|z|^2.
\]
This proves the second line of \eqref{eq:rescaled-expansions}.

It remains to expand $s_{t,x,v}$.  The first heat kernel variable is
differentiated before the substitution $y=y_{t,x}(z)$.  Let $x(s)$ satisfy
$x(0)=x$ and $\dot x(0)=v$.  Keeping $y$ fixed, set
$\xi(s)=\exp_{x(s)}^{-1}(y)$.  Since $|\xi|^2=d(x,y)^2$, the definition of
$\alpha_t$ gives
\[
 \log p_t(x,y)
 =-\frac n2\log(4\pi t)-\frac{d(x,y)^2}{4t}+\log\alpha_t(x,y).
\]
The first variation of squared distance yields
\[
 d_1\!\left(-\frac{d(x,y)^2}{4t}\right)(v)
 =\frac{\langle\xi,v\rangle}{2t}.
\]
For the amplitude term, the identity~\eqref{eq:u0} and the
normal-coordinate expansion of $\mathfrak j_x$ give
\[
 \log u_0(x,\exp_x\xi)
 =\frac1{12}\Ric_g(\xi,\xi)+O(|\xi|^3).
\]
Let $D_s\xi(s)$ denote the covariant derivative along $x(s)$.  Since $y$
is fixed, we have $D_s\xi(0)=-v+O(|\xi|)$.  Indeed, the differential of
$x\mapsto\exp_x^{-1}(y)$ at $y=x$ is $-\operatorname{Id}$.  Smoothness
then implies the stated estimate.
Differentiating the preceding expansion therefore yields
\begin{align*}
 d_1\log u_0(x,y)(v)
 &=\frac1{12}(\nabla_v\Ric_g)(\xi,\xi)
   +\frac16\Ric_g\bigl(\xi,D_s\xi(0)\bigr)+O(|\xi|^2)\\
 &=-\frac16\Ric_g(v,\xi)+O(|\xi|^2).
\end{align*}
The $C^1$ estimate~\eqref{eq:rescaled-two-term} and the positivity of
$u_0$ imply
\[
 d_1\log\alpha_t(x,y)(v)
 =d_1\log u_0(x,y)(v)+O(t)
\]
on the rescaled region.  Combining the last three estimates, we obtain
\[
 d_1\log p_t(x,y)(v)
 =\frac{\langle\xi,v\rangle}{2t}
  -\frac16\Ric_g(v,\xi)+O(|\xi|^2)+O(t).
\]
We now substitute $\xi=\sqrt t\,z$ and multiply by $\sqrt t$ to obtain
\[
 s_{t,x,v}(z)
 =\frac12\langle v,z\rangle
  -\frac t6\Ric_g(v,z)+t^{3/2}r_t^s(z),
 \qquad |r_t^s|\le C(1+|z|)^2.
\]
This proves the third line of \eqref{eq:rescaled-expansions}.  Together
with the bounds above it also proves
\eqref{eq:parametrix-remainder-bound}.  Finally, note that
$t(1+L_t)^3\to0$, so we may decrease $T$ so that
$1/2\le\rho_{t,x}\le2$.
\end{proof}

\section{Derivatives of the trial function at the base point}
\label{app:normal-jets}

\begin{proof}[Proof of Lemma~\ref{lem:normal-jets}]
The cutoff equals one near $x$.  Hence for every $w\in T_xM$, the radial
geodesic $\gamma_w(s)=\exp_x(sw)$ satisfies
$f_0(\gamma_w(s))=s\langle v,w\rangle$ near $s=0$.
Since the velocity field of a geodesic is parallel, differentiating this
identity along $\gamma_w$ at $s=0$ yields
\[
 \langle\nabla f_0(x),w\rangle=\langle v,w\rangle,\qquad
 (\nabla^2f_0)_x(w,w)=0,\qquad
 (\nabla^3f_0)_x(w,w,w)=0.
\]
These identities hold for every $w\in T_xM$.  The first is equivalent to
$\nabla f_0(x)=v$, and the second implies $\nabla^2f_0(x)=0$ because the
Hessian is symmetric.  All covariant derivatives of $f_0$ from this point
onward are evaluated at $x$.

Fix any orthonormal basis $(e_1,\ldots,e_n)$ of $T_xM$ and write
$v=\sum_\mu v^\mu e_\mu$.  Set
\[
 \Theta_{aij}:=(\nabla_{e_a}\nabla^2f_0)(e_i,e_j)
 =(\nabla^3f_0)(e_a,e_i,e_j).
\]
The symmetry of the Hessian gives $\Theta_{aij}=\Theta_{aji}$.
The Ricci identity applied to the one-form $df_0$ reads
\[
 \Theta_{aij}-\Theta_{iaj}
 =-\sum_\mu\Rm_{aij\mu}v^\mu.
\]
Substituting $w=re_a+se_i+te_j$ into $\nabla^3f_0(w,w,w)=0$ gives a
polynomial in $r,s,t$ that vanishes identically.  Its coefficient of $rst$ is therefore zero.  Hence
\[
 \Theta_{aij}+\Theta_{aji}+\Theta_{iaj}
 +\Theta_{ija}+\Theta_{jai}+\Theta_{jia}=0.
\]
Pairing terms by the Hessian symmetry reduces this identity to
\[
 \Theta_{aij}+\Theta_{ija}+\Theta_{jai}=0.
\]
We therefore have
\[
 3\Theta_{aij}
 =(\Theta_{aij}-\Theta_{ija})+(\Theta_{aij}-\Theta_{jai}).
\]
Rewriting the two differences by the Hessian symmetry and applying the
Ricci identity give
\begin{equation}\label{eq:third-jet}
 \Theta_{aij}
 =-\frac13\sum_\mu(\Rm_{aij\mu}+\Rm_{aji\mu})v^\mu.
\end{equation}
Tracing over the last two indices yields
\[
 \sum_i\Theta_{aii}=-\frac23\sum_\mu\Ric_{a\mu}v^\mu.
\]
Since covariant differentiation commutes with contraction, we have
$\langle\nabla\Delta f_0,e_a\rangle=\sum_i\Theta_{aii}$ at $x$.  This proves
$\nabla\Delta f_0=-\frac23\Ric_g^\#v$.

For the norm, we substitute \eqref{eq:third-jet} into
$|\nabla^3f_0|^2=\sum_{a,i,j}\Theta_{aij}^2$ and obtain
\begin{equation}\label{eq:third-jet-norm}
 |\nabla^3f_0|^2
 =\frac19\sum_{\mu,\nu}v^\mu v^\nu
 \sum_{a,i,j}(\Rm_{aij\mu}+\Rm_{aji\mu})
 (\Rm_{aij\nu}+\Rm_{aji\nu}).
\end{equation}
Set
\[
 A_{\mu\nu}=\sum_{a,i,j}\Rm_{aij\mu}\Rm_{aij\nu},\qquad
 B_{\mu\nu}=\sum_{a,i,j}\Rm_{aij\mu}\Rm_{aji\nu}.
\]
Expanding the product in the inner sum gives two diagonal terms and two
cross terms.  Relabeling $i$ and $j$ in the second diagonal term and in one
of the cross terms yields
\[
 \sum_{a,i,j}(\Rm_{aij\mu}+\Rm_{aji\mu})
 (\Rm_{aij\nu}+\Rm_{aji\nu})
 =2A_{\mu\nu}+2B_{\mu\nu}.
\]
The first Bianchi identity also gives
\[
\begin{aligned}
 0={}&\sum_{a,i,j}
 (\Rm_{aij\mu}+\Rm_{ija\mu}+\Rm_{jai\mu})
 (\Rm_{aij\nu}+\Rm_{ija\nu}+\Rm_{jai\nu})\\
 ={}&3A_{\mu\nu}-6B_{\mu\nu}.
\end{aligned}
\]
Here the second equality follows by cyclically relabeling $a,i,j$ and using
antisymmetry in the first two curvature indices.  Thus
$2B_{\mu\nu}=A_{\mu\nu}$, and the preceding inner sum equals
$3A_{\mu\nu}$.  Finally,
\[
 \sum_{\mu,\nu}A_{\mu\nu}v^\mu v^\nu
 =\sum_{a,i,j}
 \left(\sum_\mu\Rm_{aij\mu}v^\mu\right)^2
 =\mathcal Q_g(v,v).
\]
Using these identities in \eqref{eq:third-jet-norm} proves
$|\nabla^3f_0|^2=\frac13\mathcal Q_g(v,v)$.
\end{proof}

\end{document}